\newcounter{dfn}
\newcounter{rem}
\newtheorem{theorem}{Theorem}
\newtheorem{lemma}[theorem]{Lemma}
\newtheorem{proposition}[theorem]{Proposition}
\newtheorem{definition}[dfn]{Definition}
\theoremstyle{remark}
\newtheorem{remark}[rem]{Remark}
\renewcommand{\epsilon}{\varepsilon}
\newcommand{\defined}{\triangleq}
\DeclareMathOperator{\rank}{rank}
\DeclareMathOperator{\Tr}{tr}
\DeclareMathOperator{\diag}{diag}
\newcommand{\conp}{\overset{\p}{\to}}
\newcommand{\cond}{\overset{\mathcal{L}}{\to}}
\newcommand{\equid}{\overset{\mathcal{L}}{=}}
\newcommand{\op}{o_\p}
\newcommand{\Op}{\mathcal{O}_\p}
\newcommand{\var}{\text{Var}}
\newcommand{\e}{\mathbb{E}}
\newcommand{\n}{\mathcal{N}}
\newcommand{\p}{\mathbb{P}}
\newcommand{\A}{\mathcal{A}}
\newcommand{\C}{\mathsf{c}}
\newcommand{\m}{\mathcal{M}}
\newcommand{\R}{\mathbb{R}}
\newcommand{\N}{\mathbb{N}}
\newcommand{\T}{\mathsf{T}}
\newcommand{\Tset}{\mathscr{T}}
\newcommand{\pp}[1]{{P}^{\perp}_{#1}}
\newcommand{\phidelta}{\phi_\delta}
\newcommand{\betahatew}{\hat{\beta}_{\text{EW}}}
\newcommand{\few}{F_{\text{EW}}}
\newcommand{\fld}{F_\text{LD}}
\newcommand{\betahatewQ}{\hat{\beta}_{\text{EW},Q}}
\newcommand{\betahatmQ}{\hat{\beta}_{m,\text{Q}}}
\newcommand{\sigmanuhat}{\hat{\sigma}_\nu^2}
\newcommand{\sigmanuhatQ}{\hat{\sigma}_{\nu,Q}^2}
\newcommand{\sigmanuhatld}{\hat{\sigma}_{\nu,\text{LD}}^2}
\newcommand{\sigmaepsilonhat}{\hat{\sigma}_\epsilon^2}
\newcommand{\sigmaepsilonhatQ}{\hat{\sigma}_{\epsilon,Q}^2}
\newcommand{\sigmagammahat}{\hat{\sigma}_\gamma^2}
\newcommand{\sigmanuhatls}{\hat{\sigma}_{\nu,\text{LS}}^2}
\newcommand{\sigmaepsilonhatls}{\hat{\sigma}_{\epsilon,\text{LS}}^2}
\newcommand{\sigmagammahatls}{\hat{\sigma}_{\gamma,\text{LS}}^2}
\newcommand{\fls}{F_\text{LS}}
\newcommand{\sigmaahat}{\hat{\sigma}_a^2}
\newcommand{\sigmabhat}{\hat{\sigma}_b^2}
\newcommand{\etahat}{\hat{\eta}_\text{EW}}
\newcommand{\etahatls}{\hat{\eta}_\text{LS}}
\newcommand{\etahatoracle}{\hat{\eta}_\text{oracle}}
\newcommand{\rem}{\Delta}
\newcommand{\betahatm}{\hat{\beta}_m}
\newcommand{\dbar}{\bar{d}}
\newcommand{\lambdabar}{\bar{\lambda}}
\begin{document}
	\title{Estimating the Random Effect in Big Data Mixed Models}
	
	\author{Michael Law \and Ya\hspace{-.1em}'\hspace{-.1em}acov Ritov \thanks{Supported in part by NSF Grant DMS-1712962.}}
	\date{
		University of Michigan\\
		\today
	}
	\renewcommand\footnotemark{}
		
	\maketitle
	
	\begin{abstract}
		
		We consider three problems in high-dimensional Gaussian linear mixed models.  Without any assumptions on the design for the fixed effects, we construct an asymptotic $F$-statistic for testing whether a collection of random effects is zero, derive an asymptotic confidence interval for a single random effect at the parametric rate $\sqrt{n}$, and propose an empirical Bayes estimator for a part of the mean vector in ANOVA type models that performs asymptotically as well as the oracle Bayes estimator.  We support our results with numerical simulations and provide comparisons with oracle estimators.  The procedures developed are applied to the Trends in International Mathematics and Sciences Study (TIMSS) data.
		
	\end{abstract}

	\section{Introduction}
	In the past two decades, there has been a lot of progress in the theory for high-dimensional linear models.  However, its close cousin, the high-dimensional linear mixed model, has received significantly less attention; it was not until the past decade until there were procedures for estimation.  Consider a linear mixed model given by
	\begin{align}\label{modellmmy}
		Y = \mu + Z\nu + W\gamma + \epsilon,
	\end{align}
	with $Z \in \R^{n\times v}$, $W\in \R^{n\times r}$, and $Y,\mu,\epsilon \in \R^n$.  In addition, we observe covariates $X\in\R^{n\times p}$ such that $\rem \defined \mu - X\beta$ is a small remainder term for some sparse vector $\beta\in\R^p$ (see Section \ref{sectionnotation} for a rigorous definition).  Therefore, the model may be written as
	\begin{align*}
		Y = X\beta + Z\nu + W\gamma + \rem + \epsilon.
	\end{align*}
	Here, $X$ is a design corresponding to the fixed effects and $(Z,W)$ to the random effects.  We consider the setting where the random effects are low-dimensional, $v+r < n$, but the fixed effects are high-dimensional, $p>n$.  We have separated the random effects in two to emphasize later that we are interested in $\nu$ and view $\gamma$ as nuisance parameters.  Various authors have considered different aspects of this problem.  
	
	The earliest work of \citeA{schelldorfer2011} proposed an estimator for both $\beta$ and the variance components using a lasso-type approach.  These types of approaches were later extended by several authors who considered estimation with both convex, such as \citeA{groll2014}, and non-convex penalties, such as \citeA{wang2012}.  There has also a growing literature on model selection in high-dimensional linear mixed models (cf. \citeA{muller2013} and the references therein).  
	
	The problem of inference is slightly less well studied.  To the best of our knowledge, \citeA{chen2015} was the first to consider hypotheses testing problems for random effects in ANOVA type settings and \citeA{bradic2017} for fixed effects in high-dimensional models.  During the preparation of this manuscript, we became aware of the very recent work of \citeA{li2019}, who consider the problem of inference in high-dimensional linear mixed models.  In particular, they discuss inference for fixed effects and estimation of variance components.
	
	The goal of the present paper is to contribute to this growing literature on high-dimensional linear mixed models, both in terms of estimation and inference when the random effects and error are all Gaussian.  In particular, we consider three related problems:
	
	\begin{enumerate}
		\item Testing whether $\nu \equiv 0$.
		\item Constructing confidence intervals for $\sigma_\nu^2$ when $\nu \sim \n_v\left( 0_v , \sigma_\nu^2 I_v \right)$.
		\item Estimating using empirical Bayes in ANOVA Type Models.
	\end{enumerate}
		
	Our methodology is inspired by both low-dimensional linear mixed models as well as high-dimensional linear models.  Specifically, our approach to all three problems starts with considering a procedure in the corresponding low-dimensional problem and retrofitting it with tools and techniques from high-dimensional linear models to produce a procedure for the high-dimensional linear mixed model.
	
	\subsection{Organization of the Paper}
	We will end the current section with a description of the notation that we will adopt throughout the paper.  Sections \ref{sectiontesting}, \ref{sectionci}, and \ref{sectionestimation} consider each of the three problems outlined in the Introduction respectively.  Each one starts with a description of the problem setup, a brief motivation from the low-dimensional problem, a description of the estimator that will be considered, and ends with some theoretical results.  In Section \ref{sectionnumerics}, we provide an overview of the computation of the estimators as well as simulations and a real data application.  We conclude with discussions in Section \ref{sectiondiscussion}.  For the ease of presentation, we defer all proofs to Section \ref{sectionproofs}.
	
	\subsection{Notation}\label{sectionnotation}
	Throughout, all of our variables will have a dependence on $n$.  However, when this will not cause confusion, we will suppress this dependence.  Let $\left\Vert \cdot \right\Vert$ denote the standard (unscaled) Euclidean norm with the dimension of the space being implicit from the vector and $\left\Vert \cdot \right\Vert_0$ the $L_0$-``norm'' on $\R^p$.  We will also use $\left\Vert \cdot \right\Vert$ to denote the $\ell_2$ norm for matrices and $\left\Vert \cdot \right\Vert_\text{HS}$ to denote the Hilbert-Schmidt norm.  Furthermore, $\lambda_\text{max}(\cdot)$ and $\lambda_\text{min}(\cdot)$ will denote the maximal and minimal eigenvalue respectively.  
	
	Consistent with other high-dimensional works, we will assume that $\beta$ is a sparse vector.  There are various notions of sparsity but we will assume the slightly more general setting of weak sparsity.  For $u \in \N$, we will let $\m_u$ denote the collection of all models with the dimension of the fixed effects design equal to $u$.  For a model $m\in\m_u$, $X_m$ will denote the $n\times u$ sub-matrix of $X$ corresponding to the columns indexed by $m$.  Moreover, $P_m$ will denote the projection onto $X_m$ and $\pp{m}$ the projection onto the orthogonal complement.  We will consider the following definition of weak sparsity.
	
	\begin{definition}
		The vector $\mu$ is said to satisfy the \emph{weak sparsity property relative to $X$} with sparsity $s$ at rate $k$ if the set 
		\begin{align*}
		\mathcal{S}_\mu \defined \left\{m\in\m_s : \left\Vert \pp{m}\mu \right\Vert^2 = o(k) \right\}
		\end{align*}
		is non-empty.  A set $S\in \mathcal{S}_\mu$ is said to be a \emph{weakly sparse set} for the vector $\mu$.
	\end{definition}		
	
	Then, we will let $S\in\mathcal{S}_\mu$ denote any weakly sparse set for $\mu$.
	
	\section{A High-Dimensional $F$-Test: Testing $\nu\equiv 0$}\label{sectiontesting}

	\subsection{Model and Motivation}
	Consider the high-dimensional linear mixed model given in equation (\ref{modellmmy}).  For the fixed effects $X$, we will make no assumption on the design besides independence with $(\nu,\gamma,\epsilon)$.  For the random effects, we will similarly make no assumption on $(W,Z)$, except for the fact that $(W,Z)$ is independent of $(\nu,\gamma,\epsilon)$.  We will assume that $\epsilon \sim \n_n(0_n,\sigma_\epsilon^2 I_n)$ for some positive constant $\sigma_\epsilon^2>0$ and $\nu \sim \n_v(0_v,\Psi)$ for some symmetric positive semi-definite matrix $\Psi$.  No assumption will be necessary on $\gamma$ as the nuisance parameters will be projected out in the first stage.
	
	We are interested in the hypotheses testing problem
	\begin{align}\label{testinghypotheses}
		H_0:  \lambda_\text{max}(\Psi) = 0 &&& H_1:  \lambda_\text{max} (\Psi) > 0.
	\end{align}
	
	Suppose temporarily that we are in the low-dimensional setting with $s = p$, $p + v + r < n$, and $\rem \equiv 0$.  Then, in this low-dimensional problem, with $\nu$ and $\epsilon$ both distributed as Gaussian, the standard procedure for testing whether $\nu\equiv 0$ is through the $F$-test.  Letting $\tilde{A}$ denote the projection onto $Z \ominus (X,W)$ with $n_{\tilde{a}} = \rank(\tilde{A})$ and $\tilde{B}$ denote the projection onto $(X,Z,W)^\perp$ with $n_{\tilde{b}}=\rank(\tilde{B})$, we have that 
	\begin{align}\label{formulaflowdimensional}
		F = \frac{\left\Vert \tilde{A} Y \right\Vert^2/{n_{\tilde{a}}}}{\left\Vert \tilde{B} Y \right\Vert^2/{n_{\tilde{b}}}}.
	\end{align}
	
	Then, under the null hypothesis, the above statistic has an $F_{n_{\tilde{a}},n_{\tilde{b}}}$ distribution.  It is known that the $F$-test has certain optimality properties for certain classes of hypotheses testing problems (cf \citeA{jiang2007} and the references therein for more details).
	
	The main obstacle to directly using this $F$-test in the high-dimensional setting is removing the contribution of the fixed effects.  One possibility is to perform model selection and choose the relevant covariates from $X$ and then use the low-dimensional $F$-test.  \citeA{chen2015} consider a similar problem in the high-dimensional setting and they use a SCAD based approach for variable selection.  As a consequence, they require $p=o(\sqrt{n})$.  Instead, we leverage the fact that a projection onto a particular space is a regression onto a design whose columns span that same space.
	
	Expanding both the numerator and the denominator of the classical $F$-statistic, we have that
	\begin{align*}
		&\tilde{A}Y = \tilde{A}Z\nu + \tilde{A}\epsilon\\
		&\tilde{B}Y = \tilde{B}\epsilon.
	\end{align*}
	
	In both matrices above, they project onto the orthogonal complement of $W$, which may still be achieved in the high-dimensional problem since $W$ is a low-dimensional matrix.  We may find two matrices, $A$ and $B$, such that
	\begin{align}
		&AY = AX\beta + AZ\nu + A\epsilon\label{modelynum},\\
		&BY = BX\beta + B\epsilon\label{modelyden},
	\end{align}
	where $A \in \R^{{n_a}\times n}$ and $B \in \R^{{n_b}  \times n}$ have orthonormal rows that are mutually orthogonal (ie. $AB^\T = 0_{n_a\times n_b}$).  For simplicity, we will write 
	\begin{align*}
		Q \defined \begin{pmatrix} A \\ B \end{pmatrix}.
	\end{align*}
	Rewriting, we have that
	\begin{align}\label{modelyq}
		\begin{pmatrix} AY \\ BY \end{pmatrix} = QY = QX\beta + QZ\nu + Q\epsilon = \begin{pmatrix} AX\beta + AZ\nu + A\epsilon \\ BX\beta + B\epsilon \end{pmatrix}.
	\end{align}
	If $QX$ was low-dimensional, to obtain the projection of $QY$ onto the orthogonal complement of $QX$, this is equivalent to finding the residuals of $QY$ using the covariates $QX$; this yields $QY - QX\hat{\beta}$.  Then, we have that
	\begin{align*}
		\begin{pmatrix} AY - AX\hat{\beta} \\ BY - BX\hat{\beta} \end{pmatrix}  &= QY - QX\hat{\beta} = QX\beta - QX\hat{\beta} + QZ\nu + Q\epsilon\\ &= \begin{pmatrix} AX\beta - AX\hat{\beta} \\ BX\beta - BX\hat{\beta} \end{pmatrix} + \begin{pmatrix} AZ\nu + A\epsilon \\ B\epsilon \end{pmatrix}.
	\end{align*}
	
	Hence, this recasts the problem into one of high-dimensional prediction, for which there have been many procedures suggested to estimate $QX\beta$, such as the lasso and exponential weighting (cf. \citeA{tibshirani1996} and \citeA{leung2006} respectively).  Therefore, we propose using a plug-in estimator for $QX\beta$ using exponential weighting of all models of a particular size and then consider the resultant residuals.  This idea, under mild assumptions, will provide an asymptotic $F$-test.
	
	\subsection{Estimator}
	
	Rather than using exponential weighting for $AY$ and $BY$ separately, we will estimate the mean vector jointly; the advantages to this approach are twofold.  On the one hand, it will significantly simplify our implementation as we will only need a single Markov chain (see Section \ref{sectionnumerics}) and, on the other, the estimation seems to perform better empirically since we pool more observations together.  Therefore, we will fix a sequence of constants $u\in\N$.  Let $\betahatmQ$ denote the least-squares estimator of $\beta$ using the model $m\in\m_u$ with covariates $QX_m$.  We will define our set of exponential weights by
	\begin{align*}
		w_{m,Q} \defined \frac{\exp\left( -\frac{1}{\alpha} \left\Vert QY - QX\betahatmQ \right\Vert^2 \right)}{\sum_{k\in\m_u} \exp\left( -\frac{1}{\alpha} \left\Vert QY - QX\hat{\beta}_{k,Q} \right\Vert^2 \right)},
	\end{align*}
	where $\alpha \geq 6\lambda_\text{max}\left( QZZ^\T Q^\T + \sigma_\epsilon^2 I_{n_a+n_b}\right)$.  We will estimate $\beta$ by 
	\begin{align*}
		\betahatewQ \defined \sum_{m\in\m_u} w_{m,Q} \betahatmQ.
	\end{align*}
	The bound on $\alpha$ is to ensure the consistency result of Lemma \ref{lemmaoraclecorrelated} under both the null and the alternative hypotheses.  The subscript $Q$ is to emphasize that the estimator is constructed only using the $Q$ projected sub-data from (\ref{modelyq}).  Then, the corresponding $F$-statistic will be
	\begin{align*}
		\few \defined \frac{\left\Vert AY - AX\betahatewQ \right\Vert^2/{n_a}}{\left\Vert BY - BX\betahatewQ \right\Vert^2/{n_b}}.
	\end{align*}
	Similar to the classical $F$-statistic, we will reject the null hypothesis for large values of $\few$.  In particular, for a value $\delta\in(0,1)$, let $F_{n_a,n_b,\delta}$ denote the $\delta$ upper quantile of an $F_{n_a,n_b}$ distribution.  Then, we will consider tests of the form
	\begin{align*}
		\phidelta \defined \mathbbm{1}\left(\few > F_{n_a,n_b,\delta}\right).
	\end{align*}
	
	\subsection{Assumptions}
	We will assume without the loss of generality that the columns of $X$ have squared norm that is $\mathcal{O}(n)$.  Moreover, we will assume that
	\begin{enumerate}[label=(A\arabic*)]
		\item \label{assumptiondistnu} The vector $\nu$ satisfies
		\begin{align*}
			\nu \sim \n_v(0_v,\Psi).
		\end{align*}
		Furthermore, $\lambda_\text{max}\left( QZ \Psi Z^\T Q^\T + \sigma_\epsilon^2 I_n \right) = \mathcal{O}(1)$.
		\item \label{assumptionsparsitynull} The sequence of constants $u$ satisfies
		\begin{align*}
			&\liminf_{n\to\infty} (u-s) \geq 0,\\
			&\lim_{n\to\infty} \frac{u\log(p)}{n_a \wedge n_b} \to 0.
		\end{align*}
	\end{enumerate}
	\begin{enumerate}[label=(A$\arabic*^\ast$)]
		\setcounter{enumi}{1}
		\item \label{assumptionsparsityalternative} The sequence of constants $u$ and the number of observations in the reduced models, $n_a$ and $n_b$, satisfy
		\begin{align*}
			&\liminf_{n\to\infty} (u-s) \geq 0,\\
			&\lim_{n\to\infty} \frac{u\log(p)}{\sqrt{n}} = 0,\\
			&n_a \asymp n_b \asymp n.
		\end{align*}
	\end{enumerate}
	
	\begin{remark}
		There are two assumptions regarding the sparsity and the number of random effects.  The first assumption, \ref{assumptionsparsitynull}, is quite weak in terms of the sparsity level and is used to establish the distribution of the estimator under the null hypothesis.  Considering the models in equations (\ref{modelynum}) and (\ref{modelyden}), which have $n_a$ and $n_b$ observations respectively, the sparsity only needs to be of smaller order than the number of observations in each of the two reduced models.  This is known to be the optimal rate at which the mean vector can be estimated in a high-dimensional linear model (for example, cf. \citeA{raskutti2011} or \citeA{rigollet2011}).  The requirement that $\liminf (u-s)\geq 0$ is only to ensure that the models we consider for exponential weighting are sufficiently large to remove the fixed effects. 
		
		The stronger assumption, \ref{assumptionsparsityalternative}, is used when considering the power of the test under a specific contiguous alternative (see equation (\ref{testinghypothesescontiguous})), which gives the parametric rate of $\sqrt{n}$.  However, in cases where we have a different sequence of alternatives, the assumption may be weakened accordingly.  Note that this is similar to the setting of the de-biased lasso, which requires a ultra sparsity to achieve the parametric rate in testing a single covariate (cf. \citeA{cai2017}).  
	\end{remark}
	
	\subsection{Main Results for $\few$}
	We start by stating the asymptotic distribution of $\few$ under the null hypothesis.
	\begin{theorem}\label{theoremnullhypothesis}
		Consider the linear mixed model given by equation (\ref{modellmmy}) and the hypotheses testing problem from equation (\ref{testinghypotheses}).  Assume \ref{assumptionsparsitynull}.  Under the null hypothesis and $\alpha \geq 4\sigma_\epsilon^2$,
		\begin{align*}
			\few = F + \op(1),
		\end{align*}
		where $F\sim F_{n_a,n_b}$.
	\end{theorem}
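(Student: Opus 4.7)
The plan is to show that, under $H_0$, both the numerator and the denominator of $\few$ are dominated by the corresponding quadratic forms in $\epsilon$ alone, whose ratio is exactly $F_{n_a,n_b}$-distributed. Under the null, $\nu \equiv 0$ almost surely, so the model reduces to $Y = X\beta + W\gamma + \Delta + \epsilon$. By construction, both $A$ and $B$ annihilate the column span of $W$, so $QY = QX\beta + Q\Delta + Q\epsilon$. Because $Q$ has orthonormal rows with $AB^\T = 0$, the sub-vectors $A\epsilon$ and $B\epsilon$ are independent Gaussians with covariances $\sigma_\epsilon^2 I_{n_a}$ and $\sigma_\epsilon^2 I_{n_b}$. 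Consequently the exact ratio $F \defined (\|A\epsilon\|^2/n_a)/(\|B\epsilon\|^2/n_b)$ satisfies $F \sim F_{n_a,n_b}$, and it suffices to show $\few = F + \op(1)$.

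The key ingredient is a prediction-error bound for $\betahatewQ$. Under $H_0$, $QY = QX\beta + Q\Delta + Q\epsilon$ is a standard sparse linear model with noise variance $\sigma_\epsilon^2$, and the condition $\alpha \geq 4\sigma_\epsilon^2$ is exactly what a Leung--Barron type oracle inequality requires; this is the content of Lemma \ref{lemmaoraclecorrelated}. Applying it with a weakly sparse reference set $S \in \mathcal{S}_\mu$ (which eventually lies in $\m_u$ by $\liminf(u-s) \geq 0$ in \ref{assumptionsparsitynull}) yields
\begin{align*}
\|QX(\betahatewQ - \beta)\|^2 = \Op\!\left(u \log p + \|\pp{S}\mu\|^2\right) = \op(n_a \wedge n_b),
\end{align*}
since $u\log p = o(n_a \wedge n_b)$ by \ref{assumptionsparsitynull} and $\|\pp{S}\mu\|^2$ is negligible by weak sparsity. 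Because $\|Qv\|^2 = \|Av\|^2 + \|Bv\|^2$, the individual terms $\|AX(\betahatewQ - \beta)\|^2$ and $\|BX(\betahatewQ - \beta)\|^2$ inherit the same bound.

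With this in hand, I expand
\begin{align*}
\|AY - AX\betahatewQ\|^2 = \|A\epsilon\|^2 + \|AX(\beta-\betahatewQ) + A\Delta\|^2 + 2\langle AX(\beta-\betahatewQ) + A\Delta,\,A\epsilon \rangle.
\end{align*}
The middle term is $\op(n_a)$ by the prediction bound together with $\|A\Delta\|^2 \leq \|\Delta\|^2 = o(n_a)$; the cross term is $\op(\sqrt{n_a}) \cdot \Op(\sqrt{n_a}) = \op(n_a)$ via Cauchy--Schwarz, using $\|A\epsilon\|^2 \sim \sigma_\epsilon^2 \chi^2_{n_a}$. Dividing by $n_a$ gives numerator $= \|A\epsilon\|^2/n_a + \op(1)$, and the identical argument yields denominator $= \|B\epsilon\|^2/n_b + \op(1)$. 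Since $n_b \to \infty$ forces $\|B\epsilon\|^2/n_b \conp \sigma_\epsilon^2 > 0$, the denominator is bounded away from zero with probability tending to one, so Slutsky's theorem delivers $\few = F + \op(1)$.

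The principal obstacle is the prediction step: invoking Lemma \ref{lemmaoraclecorrelated} on the rotated model to obtain the rate $\op(n_a \wedge n_b)$ while absorbing the weak-sparsity remainder $Q\Delta$ into the oracle inequality. Once that prediction rate is secured, the quadratic-form expansion and the exact distributional argument for the Gaussian ratio are essentially mechanical.
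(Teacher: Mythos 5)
Your argument follows essentially the same route as the paper's: show via an oracle inequality for the exponentially weighted estimator that the fitted mean is $\op(n_a\wedge n_b)$-close to $Q\mu$, conclude that the numerator and denominator equal $\left\Vert A\epsilon\right\Vert^2/n_a$ and $\left\Vert B\epsilon\right\Vert^2/n_b$ up to $\op(1)$, and finish with the exact independence of $A\epsilon$ and $B\epsilon$ coming from $AB^\T=0$ and Gaussianity. The paper reaches the $\op(1)$ reduction by a triangle inequality on the norms followed by squaring, whereas you expand the square and control the cross term by Cauchy--Schwarz; these are equivalent and both correct.

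There is, however, one concrete mismatch. You invoke Lemma \ref{lemmaoraclecorrelated} for the prediction step, but that lemma requires $\alpha > 6\lambda_{\max}(\Sigma)$, which under the null (where $\Sigma=\sigma_\epsilon^2 I$) means $\alpha > 6\sigma_\epsilon^2$. The theorem only assumes $\alpha \geq 4\sigma_\epsilon^2$, so your proof as written covers a strictly smaller range of $\alpha$ than claimed. The paper avoids this by citing Theorem 5 of Leung and Barron directly for the i.i.d.\ case, where the sharper threshold $\alpha \geq 4\sigma_\epsilon^2$ suffices (the paper's own remark after Lemma \ref{lemmaoraclecorrelated} notes precisely that its generalization to correlated errors comes at the cost of a larger constant). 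To close the gap you should either appeal to the i.i.d.\ oracle inequality under $H_0$, or accept the stronger hypothesis $\alpha > 6\sigma_\epsilon^2$.
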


	This naturally leads to the question regarding the power of the testing procedure.  In particular, between what contiguous alternatives can we distinguish?  We will consider the following contiguous testing problem.
	\begin{align}\label{testinghypothesescontiguous}
		H_0:  \lambda_{\text{max}}(AZ\Psi Z^\T A^\T) = 0 &&& H_1:  \lambda_{\text{min}}(AZ\Psi Z^\T A^\T) = \frac{h}{\sqrt{n}}.
	\end{align}
	
	\begin{remark}
		Suppose that $\nu$ corresponds to a single random effect and the design is balanced, with $\nu \sim \n_v(0_v,\sigma_\nu^2 I_v)$.  Then, the above hypotheses becomes
		\begin{align*}
			H_0:  \sigma_\nu^2 = 0 &&& H_1:  \sigma_\nu^2 = \frac{h}{\sqrt{n}},
		\end{align*}
		which is a standard hypotheses testing problem, such as in the balanced one-way random effects model.  In this model, in the low-dimensional setting, the rate of $\sqrt{n}$ is optimal.
	\end{remark}
	
	\begin{theorem}\label{theoremalternativehypothesis}
		Consider the linear mixed model given by equation (\ref{modellmmy}) and the hypotheses testing problem from equation (\ref{testinghypothesescontiguous}).  Assume further \ref{assumptiondistnu} and \ref{assumptionsparsityalternative}.  Fix a value of $\delta>0$.  Under the alternative hypothesis with $h>0$ sufficiently large (not depending on $n$) and $\alpha \geq 6\lambda_\text{max}\left( QZ\Psi Z^\T Q^\T + \sigma_\epsilon^2 I_{n_a+n_b}\right)$, the sum of type I and type II error for the test statistic $\phidelta$ is between zero and one.
	\end{theorem}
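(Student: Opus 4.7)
The plan is to compare $\few$ against the oracle statistic
\begin{align*}
\few^{\ast} \defined \frac{\|\eta_A\|^2/n_a}{\|\eta_B\|^2/n_b}, \quad \eta_A \defined AZ\nu + A\epsilon, \quad \eta_B \defined B\epsilon,
\end{align*}
which uses the true $\beta$. Under the null, $\few^{\ast} \sim F_{n_a,n_b}$; under the alternative, every eigenvalue of $\Sigma_A \defined AZ\Psi Z^\T A^\T + \sigma_\epsilon^2 I_{n_a}$ is at least $\sigma_\epsilon^2$, with strict inequality along the eigendirections of $AZ\Psi Z^\T A^\T$, so $\few^{\ast}$ stochastically dominates $F_{n_a,n_b}$ and inherits a mean shift of order $h/\sqrt{n}$.

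The first step is to expand
\begin{align*}
\|AY - AX\betahatewQ\|^2 = \|\eta_A\|^2 + 2\langle \eta_A, AX(\beta - \betahatewQ)\rangle + \|AX(\beta - \betahatewQ)\|^2
\end{align*}
and analogously for the denominator, then invoke Lemma~\ref{lemmaoraclecorrelated} under assumption \ref{assumptionsparsityalternative} to obtain $\|QX(\beta - \betahatewQ)\|^2 = \Op(u\log p) = \op(\sqrt{n})$; this handles the quadratic estimation term, which contributes $\op(1/\sqrt{n})$ after normalization by $n_a \asymp n$. The second step is to show the cross term contributes $\op(1/\sqrt{n})$ as well (see below). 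Granting that, a CLT for quadratic Gaussian forms gives $\|\eta_A\|^2/n_a = \sigma_\epsilon^2 + \Tr(AZ\Psi Z^\T A^\T)/n_a + \Op(1/\sqrt{n})$, with trace of order $h/\sqrt{n}$ by the $\lambda_\text{min}$ hypothesis and rank considerations, and $\|\eta_B\|^2/n_b = \sigma_\epsilon^2 + \Op(1/\sqrt{n})$. The delta method then yields $\sqrt{n}(\few - 1) \Rightarrow \n(ch/\sigma_\epsilon^2, \tau^2)$ under $H_1$ for positive constants $c, \tau$, and since $\sqrt{n}(F_{n_a,n_b,\delta} - 1) \to z_\delta \tau$ by asymptotic normality of the $F$-quantile, $\p_{H_1}(\few > F_{n_a,n_b,\delta}) \to 1 - \Phi(z_\delta - ch/(\tau\sigma_\epsilon^2))$, which exceeds $\delta$ by a fixed positive amount once $h$ is taken large enough independently of $n$. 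Together with Theorem~\ref{theoremnullhypothesis}, this places the sum of Type I and Type II errors strictly below $1$.

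The main obstacle will be sharpening the cross term: naive Cauchy-Schwarz gives $|\langle\eta_A, AX(\beta - \betahatewQ)\rangle| \leq \|\eta_A\| \|AX(\beta - \betahatewQ)\| = \Op(\sqrt{n u \log p})$, which normalized by $n_a$ is $\op(n^{-1/4})$---too coarse next to the signal scale $h/\sqrt{n}$. I would sharpen this by substituting the least-squares representation $\betahatmQ - \beta_m = M_m^{-1}[(AX_m)^\T \eta_A + (BX_m)^\T \eta_B]$ for $m \supseteq S$, where $M_m \defined X_m^\T(A^\T A + B^\T B)X_m$, into the cross term. This decomposes $\langle \eta_A, AX_m(\betahatmQ - \beta_m)\rangle$ into a quadratic form $\eta_A^\T AX_m M_m^{-1}(AX_m)^\T \eta_A$, whose kernel has rank at most $u$ and is dominated by the orthogonal projection onto the columns of $AX_m$ (so both mean and standard deviation are $\mathcal{O}(u)$); plus a bilinear form $\eta_A^\T AX_m M_m^{-1}(BX_m)^\T \eta_B$ in the independent noises whose conditional standard deviation given $\eta_B$ is $\mathcal{O}(\sqrt{u})$. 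Averaging over the exponential weights $w_{m,Q}$ (which by the oracle inequality argument concentrate on models containing $S$, making the contribution from $m \not\supseteq S$ negligible), the total cross term is $\Op(u) = \op(\sqrt{n})$ under \ref{assumptionsparsityalternative}, yielding the required $\op(1/\sqrt{n})$ after normalization.
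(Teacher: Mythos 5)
Your proposal follows the same skeleton as the paper's proof---replace $\few$ by the oracle statistic built from $A\xi=AZ\nu+A\epsilon$ and $B\epsilon$, control the substitution error via Lemma~\ref{lemmaoraclecorrelated} under \ref{assumptionsparsityalternative}, and then show the resulting statistic beats the quantile $F_{n_a,n_b,\delta}$ with probability exceeding $\delta$---but the finishing argument differs. The paper bounds $F_{n_a,n_b,\delta}$ from above and $\few$ from below on a high-probability event using the Laurent--Massart $\chi^2$ tail inequalities and tunes $h$ against the resulting constants; you instead run a CLT plus delta method for $\sqrt{n}(\few-1)$ and compare against the normal approximation of the $F$-quantile. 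Both deliver the conclusion; the concentration route is non-asymptotic and avoids identifying the limiting variance $\tau^2$, while your route gives an explicit asymptotic power curve. The more substantive difference is your treatment of the cross term $\langle A\xi,\,A\mu-AX\betahatewQ\rangle$: you correctly observe that Cauchy--Schwarz combined with $\left\Vert A\mu-AX\betahatewQ\right\Vert^2=\op(\sqrt{n})$ only yields $\op(n^{-1/4})$ after normalization by $n_a$, which is too coarse relative to the signal $h/\sqrt{n}$. The paper asserts the needed $\op(1/\sqrt{n})$ remainder ``by an identical line of reasoning'' to Theorem~\ref{theoremnullhypothesis}, which, taken literally, runs into exactly this obstruction; your explicit decomposition of the cross term into a rank-$u$ quadratic form in $A\xi$ and a bilinear form in the independent noises is the right repair (two caveats: after taking the maximum over the $|\m_u|$ candidate models, or controlling the weights on models not containing $S$, the bound degrades to $\Op(u\log p)$ rather than $\Op(u)$, which is still $\op(\sqrt{n})$ under \ref{assumptionsparsityalternative}; and under weak sparsity there is an additional bias term $\pp{m}\mu$ whose squared norm is $o(\sqrt{n})$ and must be carried along). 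Minor points: your claim $\left\Vert QX(\beta-\betahatewQ)\right\Vert^2=\Op(u\log p)$ overstates what Lemma~\ref{lemmaoraclecorrelated} provides (it gives $\op(\sqrt{n})$ in expectation, which suffices), and the hypothesis $\lambda_{\text{min}}(AZ\Psi Z^\T A^\T)=h/\sqrt{n}$ must be read as a statement about the full $n_a\times n_a$ matrix for the trace lower bound of order $n_a h/\sqrt{n}$ (and the paper's stochastic-domination step) to go through.
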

	
	To prove the theorem, we will need the following lemma that provides consistency of $\betahatewQ$ under the setting of correlated errors, which may be of interest in its own right.
	
	\begin{lemma}\label{lemmaoraclecorrelated}
		Consider a high-dimensional linear model given by
		\begin{align*}
			Y = \mu + \xi,
		\end{align*}
		with $\xi \sim \n_n(0_n,\Sigma)$ for some covariance matrix $\Sigma$ satisfying $\lambda = \lambda_\text{max}(\Sigma) = \mathcal{O}(1)$.  Assume that $\mu$ is weakly sparse relative to $X$ with sparsity $s$ at rate $n$ and $\left\Vert X\beta \right\Vert^2 = \mathcal{O}(n)$.  Assume further $\log(|\m_u|) = o(n)$.  Letting $\betahatm$ denote the least-squares estimator for $\beta$ using the covariates $X_m$, define the exponential weights as
		\begin{align*}
			w_m \defined \frac{\exp\left( -\frac{1}{\alpha} \left\Vert Y - X\betahatm \right\Vert^2 \right)}{\sum_{k\in\m_u} \exp\left( -\frac{1}{\alpha} \left\Vert Y - X\hat{\beta}_k \right\Vert^2 \right)},
		\end{align*}
		with $\alpha > 6\lambda$.  Then,
		\begin{align*}
			\frac{1}{n} \e \left\Vert \sum_{m\in\m_u} w_m X\betahatm - \mu \right\Vert^2 \to 0.
		\end{align*}
		If in addition $\mu$ is weakly sparse relative to $X$ with sparsity $s$ at rate $\sqrt{n}$ and $\log(|\m_u|) = o(\sqrt{n})$, then
		\begin{align*}
			\frac{1}{\sqrt{n}} \e \left\Vert \sum_{m\in\m_u} w_m X\betahatm - \mu \right\Vert^2 \to 0.
		\end{align*}
	\end{lemma}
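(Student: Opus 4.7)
My overall strategy is to establish a Leung--Barron-style oracle inequality for the exponentially weighted estimator, adapted to correlated Gaussian noise, and then apply it with the benchmark model taken to be an extension of a weakly sparse set for $\mu$. First, by convexity of $\|\cdot\|^2$,
\begin{equation*}
	\Bigl\Vert \sum_{m\in\m_u} w_m X\betahatm - \mu \Bigr\Vert^2 \leq \sum_{m\in\m_u} w_m \bigl\Vert X\betahatm - \mu\bigr\Vert^2,
\end{equation*}
so it suffices to control $\e\sum_m w_m \|X\betahatm - \mu\|^2$. Writing $X\betahatm = P_m Y$ and decomposing against $Y = \mu + \xi$ yields the Stein-type identity
\begin{equation*}
	\e\|P_m Y - \mu\|^2 = \e\|Y - P_m Y\|^2 + 2\Tr(P_m \Sigma) - \Tr(\Sigma),
\end{equation*}
which converts the random residual sums of squares that define the weights into the risks I want to bound.

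The main technical step is an oracle inequality of the form: for any probability measure $\rho$ on $\m_u$,
\begin{equation*}
	\e\sum_{m\in\m_u} w_m \|P_m Y - \mu\|^2 \;\leq\; \sum_{m\in\m_u} \rho_m \, \e\|P_m Y - \mu\|^2 \;+\; \alpha \sum_{m\in\m_u} \rho_m \log\bigl(|\m_u|\rho_m\bigr) + o(n).
\end{equation*}
The mechanism is the Gibbs variational principle: the weights $w$ are the exact minimizer of $\sum_m w_m\|Y - P_m Y\|^2 + \alpha \sum_m w_m \log(|\m_u| w_m)$ over probability vectors, so testing against any $\rho$ gives an almost-sure bound on a penalized version of $\sum_m w_m\|Y - P_m Y\|^2$, which I then convert to an expected bound on risks using the Stein identity above together with exponential-moment bounds on $\|P_m \xi\|^2$. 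The assumption $\alpha > 6\lambda$ is exactly what is needed to make the Laplace transforms of these quadratic forms in $\xi$ finite with enough slack to absorb the cross terms produced when residuals are traded for risks.

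To finish, I take $\rho$ to be a point mass at any $S'\in\m_u$ that contains a weakly sparse set $S$ for $\mu$; such an extension exists for large $n$ because $\liminf (u-s)\geq 0$, and since $P_{S'}-P_S$ is a projection onto an orthogonal subspace, $\|\pp{S'}\mu\|^2 \leq \|\pp{S}\mu\|^2$. The benchmark term becomes
\begin{equation*}
	\e\|P_{S'} Y - \mu\|^2 = \|\pp{S'}\mu\|^2 + \Tr(P_{S'}\Sigma) \leq o(n) + u\lambda,
\end{equation*}
where $u = o(n)$ follows from $u\log(p)/n \to 0$, and the KL term reduces to $\alpha \log|\m_u| = o(n)$ under the same assumption. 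Combining these with the convexity reduction and dividing by $n$ gives the first claim. For the $\sqrt{n}$-rate conclusion, the identical argument applies: the sharper hypotheses $\|\pp{S}\mu\|^2 = o(\sqrt n)$ and $\log|\m_u| = o(\sqrt n)$ force every remainder to be $o(\sqrt n)$.

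The main obstacle is the correlated-noise oracle inequality itself. For i.i.d.\ Gaussian noise the classical Leung--Barron argument leverages an exact Stein identity that pairs naturally with the threshold $\alpha \geq 4\sigma^2$. With a general covariance $\Sigma$, this exact identity must be replaced by bounds on the Laplace transform of quadratic forms $\xi^\T M \xi$ in correlated Gaussians, which pay a constant factor of order $\lambda_{\max}(\Sigma)$ per degree of freedom; the constant $6$ (rather than $4$) in $\alpha > 6\lambda$ is the price of making these bounds uniform across $m\in\m_u$ while still leaving enough slack to control the cross terms that appear when $\|Y - P_m Y\|^2$ is swapped for $\|P_m Y - \mu\|^2$. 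Carrying this exponential-moment analysis through so that the oracle-inequality remainder is genuinely negligible after dividing by $n$ (and later by $\sqrt n$) is the delicate part of the proof.
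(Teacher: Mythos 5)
Your route is genuinely different from the paper's. You bound the weighted loss via the Gibbs variational principle (the weights minimize $\sum_m \rho_m\|Y-P_mY\|^2+\alpha\,\mathrm{KL}(\rho\|\pi)$), test against a point mass at an oracle model, and then trade empirical risks for losses with a Stein-type correction. The paper instead never passes through a regret bound: it writes $\|P_mY-\mu\|^2\leq 2\bigl(\|\pp{m}\mu\|^2+\|P_m\xi\|^2\bigr)$ --- a decomposition with \emph{no} cross term, since $\pp{m}\mu\perp P_m\xi$ --- splits $\m_u$ into models with $\|\pp{m}\mu\|^2\leq tn$ and the rest, and shows directly that every high-bias model receives exponentially small expected weight, via a three-factor H\"older bound on $\e\exp\bigl(-\tfrac{1}{\alpha}(\|\pp{m}Y\|^2-\|\pp{S}Y\|^2)\bigr)$; that is where $\alpha>6\lambda$ enters concretely. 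The common ingredient is the uniform control of $\sum_m w_m\|P_m\xi\|^2$ over $\m_u$ (Hanson--Wright plus truncation). For the first conclusion, at scale $n$, your plan is plausible: the benchmark, the $\alpha\log|\m_u|$ penalty, and all the correction terms are $o(n)$ under the stated hypotheses, and the variational inequality holds almost surely for any $\alpha>0$.

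The gap is in the second conclusion. When you swap $\|Y-P_mY\|^2$ for $\|P_mY-\mu\|^2$ inside the random weighted sum you generate the term $\e\sum_m w_m\,\xi^\T\pp{m}\mu$, equivalently $-\e\sum_m w_m\,\xi^\T P_m\mu$ (the $\|\xi\|^2$ and $\xi^\T\mu$ pieces cancel because $\sum_m w_m=1$). The best uniform bound available is $\max_{m\in\m_u}|\xi^\T P_m\mu|=\Op\bigl(\sqrt{\lambda\,\|\mu\|^2\log|\m_u|}\bigr)=\Op\bigl(\sqrt{n\cdot o(\sqrt{n})}\bigr)=\op(n^{3/4})$, which is $o(n)$ but \emph{not} $o(\sqrt{n})$. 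So the claim that ``the identical argument applies'' at the $\sqrt{n}$ scale does not go through: to push this cross term below $\sqrt{n}$ you must exploit that $\xi^\T\pp{m}\mu$ has variance at most $\lambda\|\pp{m}\mu\|^2$, i.e.\ split into models with $\|\pp{m}\mu\|^2\leq t\sqrt{n}$ (where the term is genuinely $o(\sqrt{n})$ uniformly) and high-bias models, and for the latter show $\e w_m$ decays like $\exp\bigl(-(1-6\lambda/\alpha)\|\pp{m}\mu\|^2/\alpha\bigr)$ --- which is exactly the Laplace-transform machinery of the paper's proof that your outline leaves out. Relatedly, your stated reason for $\alpha>6\lambda$ (``absorbing cross terms'' in the oracle inequality) is not where the constant actually bites; it bites in the moment generating function $\e\exp\bigl(\tfrac{3}{\alpha}\|P_m\xi\|^2\bigr)\leq(1-6\lambda/\alpha)^{-u/2}$ used in that weight-decay bound. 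Once you add the good/bad split and the $\e w_m$ estimate, your argument closes; without it, the $\sqrt{n}$-rate statement is unproven.
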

	
	\begin{remark}
		The above lemma slightly generalizes Theorem 5 of \citeA{leung2006} to heteroskedastic and dependent, but still Gaussian, errors.  However, in the setting where the errors are independent and identically distributed, the result of \citeA{leung2006} is stronger as their bound for $\alpha$ is smaller.
	\end{remark}

	\section{Confidence Intervals for $\sigma_\nu^2$}\label{sectionci}
	
	\subsection{Model and Motivation}

	In the previous section, we considered the problem of testing a collections of random effects.  However, it is often of interest to construct confidence intervals for the variance of a particular random effect.  Suppose that $\Psi = \sigma_\nu^2 I_v$.  In the low-dimensional setting, there have been many procedures suggested to construct confidence intervals, from likelihood based approaches to $F$-test inversions (for example, see \citeA{jiang2007} and the references therein for a non-exhaustive list).
	
	We take an approach similar to variance estimation in linear models.  Consider the two reduced models from Section \ref{sectiontesting}, which are reproduced below for convenience:
	\begin{align*}
		&AY = AX\beta + AZ\nu + A\epsilon,\\
		&BY = BX\beta + B\epsilon,
	\end{align*}
	
	The second model is a standard high-dimensional linear model with $B\epsilon \sim \n_{n_b}\left(0_{n_b},\sigma_\epsilon^2 I_{n_b} \right)$.  In the low-dimensional setting, $\sigma_\epsilon^2$ would be estimated by mean squared error, which is computed through projecting $BY$ onto the orthogonal complement of $BX$.  Hence, we may achieve a similar result by viewing this as a prediction problem and regressing out $BX$.  For the other model, note that we have a high-dimensional linear model with noise $AZ\nu + A\epsilon \sim \n_{n_a} \left( 0_{n_a} , \sigma_\nu^2 AZZ^\T A^\T + \sigma_\epsilon^2 I_{n_a} \right)$.  Since the matrix $A$ and $Z$ are both observed, we may whiten the data with respect to $\nu$ by taking the Spectral Decomposition of $AZZ^\T A^\T = V D V^\T$ and left multiplying by $D^{-1/2} V^\T$.  Here, $D$ is diagonal and $V$ is unitary.  This yields
	\begin{align}\label{modelyci}
		&D^{-1/2}V^\T A Y = D^{-1/2}V^\T AX\beta + D^{-1/2}V^\T AZ\nu + D^{-1/2}V^\T A \epsilon .
	\end{align}
	Then, the above can be viewed as a high-dimensional linear model with noise given by $D^{-1/2}V^\T AZ\nu + D^{-1/2}V^\T A \epsilon \sim \n_{n_a}\left( 0_{n_a} , \sigma_\nu^2 I_{n_a} + \sigma_\epsilon^2D^{-1} \right)$.  Since $D^{-1}$ is observed and $\sigma_\epsilon^2$ can be estimated, this will provide an estimator of $\sigma_\nu^2$, which we may then use to construct a pivot.
	
	\subsection{Estimator}
	As outlined previously, we will need to construct two estimators, one for $\sigma_\epsilon^2$ and one for $\sigma_\nu^2$.  Again, we will jointly estimate the mean vector $Q\mu$.  Therefore, we will start by defining
	\begin{align*}
		\sigmaepsilonhatQ \defined \frac{1}{n_b} \left\Vert BY - BX\betahatewQ \right\Vert^2.
	\end{align*}
	Then, $\sigmanuhatQ$ will be given by
	\begin{align*}
		\sigmanuhatQ \defined \frac{1}{\Tr(D^{-1})}\left\Vert D^{-1/2}V^\T AY - D^{-1/2} V^\T AX\betahatewQ \right\Vert^2 - \sigmaepsilonhatQ.
	\end{align*}
	
	\subsection{Assumptions}
	In addition to the assumptions from Section \ref{sectiontesting}, we will need two additional assumptions regarding the distribution of the values of the diagonal matrix $D$.
	
	\begin{enumerate}[label=(B\arabic*)]
		\item \label{assumptioneffectivesamplesize} Letting $d=\diag(D)$, the norms satisfy
		\begin{align*}
			\limsup_{n\to\infty} \frac{\left\Vert \sigma_\nu^2 1_{n_a} + \sigma_\epsilon^2 d \right\Vert_\infty}
			{\left\Vert \sigma_\nu^2 1_{n_a} + \sigma_\epsilon^2 d \right\Vert_2} \to 0.
		\end{align*}

		\item \label{assumptionasymptoticsamplesize} The sample sizes $n_a$ and $n_b$ satisfy
		\begin{align*}
			&\lim_{n\to\infty} \frac{n\left\Vert \sigma_\nu^2 1_{n_a} + \sigma_\epsilon^2 d \right\Vert_2^2}{\left(\Tr(D^{-1})\right)^2} \to \sigma_a^2>0,\\
			&\lim_{n\to\infty} \frac{2\sigma_\epsilon^4n}{n_b} \to \sigma_b^2>0,\\
			&\lim_{n\to\infty} \frac{n_a}{\Tr(D^{-1})}\to \dbar > 0.
		\end{align*}
	\end{enumerate}
	
	\begin{remark}
		Assumption \ref{assumptioneffectivesamplesize} on the matrix $D$ is to ensure that the design is sufficiently well balanced and that $Z$ is distinct enough from $W$.  This assumption is satisfied in general settings such as relatively balanced ANOVA models.

		Assumption \ref{assumptionasymptoticsamplesize} is a technical regularity condition in view of the triangular array framework.  By possibly reducing to a convergent sub-sequence, we may always satisfy assumption \ref{assumptionasymptoticsamplesize}.
	\end{remark}
	
	\subsection{Main Results for $\sigmanuhat$}
	We will start by stating the asymptotic distribution of $\sigmanuhat$.
	
	\begin{theorem}\label{theoremci}
		Consider the linear mixed model given by equation (\ref{modellmmy}), with the reduced models given in equations (\ref{modelyden}) and (\ref{modelyci}).  Assume further \ref{assumptiondistnu} with $\Psi = \sigma_\nu^2 I_v$, \ref{assumptionsparsityalternative}, \ref{assumptioneffectivesamplesize}, and \ref{assumptionasymptoticsamplesize}.  Then,
		\begin{align*}
			\sqrt{n} \left( \sigmanuhatQ - \dbar\sigma_\nu^2 \right) \cond \n(0,\sigma_a^2 + \sigma_b^2).
		\end{align*}
	\end{theorem}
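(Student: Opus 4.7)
The plan is to decompose $\sigmanuhatQ$ as an oracle quadratic statistic plus a negligible remainder, apply a CLT to the former via independence, and control the latter with Lemma~\ref{lemmaoraclecorrelated}. Working in the whitened coordinates of (\ref{modelyci}), set $\tilde Y = D^{-1/2}V^\T AY$, $\tilde X = D^{-1/2}V^\T AX$, and $\tilde\xi = D^{-1/2}V^\T A(Z\nu+\epsilon)$, a Gaussian vector with independent coordinates and diagonal covariance determined by $\sigma_\nu^2$, $\sigma_\epsilon^2$, and $D$. Expanding each squared norm in the definition of $\sigmanuhatQ$ against the true $\beta$ gives
\begin{align*}
\sigmanuhatQ - \dbar\sigma_\nu^2 = M_n + R_n,
\end{align*}
where $M_n = \|\tilde\xi\|^2/\Tr(D^{-1}) - \|B\epsilon\|^2/n_b - \dbar\sigma_\nu^2$ is the oracle piece and $R_n$ collects the squared-bias and cross terms involving $\beta - \betahatewQ$.

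For the asymptotic distribution of $\sqrt n M_n$, observe that $QQ^\T = I_{n_a+n_b}$ makes $A\epsilon$ and $B\epsilon$ independent and each is independent of $\nu$, so $\tilde\xi \perp B\epsilon$. Each of $\|\tilde\xi\|^2$ and $\|B\epsilon\|^2$ is a sum of independent Gaussian squares. The scaling in \ref{assumptionasymptoticsamplesize} gives $\var(\sqrt n\,\|\tilde\xi\|^2/\Tr(D^{-1})) \to \sigma_a^2$ and $\var(\sqrt n\,\|B\epsilon\|^2/n_b) \to \sigma_b^2$, and the deterministic bias $\sqrt n(\sigma_\nu^2 n_a/\Tr(D^{-1}) - \dbar\sigma_\nu^2)$ is handled along the convergent sub-sequence noted in the remark. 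Assumption \ref{assumptioneffectivesamplesize} bounds the $\ell_\infty/\ell_2$ ratio of the coordinate-variance vector of $\tilde\xi$, verifying Lindeberg's condition for the triangular array $\{\tilde\xi_i^2\}$. Combined with independence, the CLT yields $\sqrt n M_n \cond \n(0, \sigma_a^2 + \sigma_b^2)$.

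The third step is to show $\sqrt n R_n = o_P(1)$. Lemma~\ref{lemmaoraclecorrelated} applied to the $Q$-projected model of (\ref{modelyq})---whose noise covariance is $O(1)$ in operator norm by \ref{assumptiondistnu} and whose sparsity satisfies the requisite rate by \ref{assumptionsparsityalternative}---yields $\e\|QX(\betahatewQ - \beta)\|^2 = o(\sqrt n)$. This bounds the squared-bias pieces $\|BX(\beta-\betahatewQ)\|^2/n_b$ and $\|\tilde X(\beta-\betahatewQ)\|^2/\Tr(D^{-1})$ to $o_P(1/\sqrt n)$, the latter after using \ref{assumptionasymptoticsamplesize} to absorb the $D^{-1/2}$ scaling into the normalization by $\Tr(D^{-1})$.

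The main obstacle is the pair of cross terms, $(B\epsilon)^\T BX(\beta-\betahatewQ)/n_b$ and $\tilde\xi^\T \tilde X(\beta-\betahatewQ)/\Tr(D^{-1})$: a direct Cauchy--Schwarz using $\|\tilde\xi\|, \|B\epsilon\| = O_P(\sqrt n)$ together with $\|\tilde X(\beta-\betahatewQ)\|, \|BX(\beta-\betahatewQ)\| = o_P(n^{1/4})$ produces only $o_P(n^{-1/4})$ after normalization, a factor of $n^{1/4}$ short of the target $o_P(n^{-1/2})$. I would sharpen the bound via Gaussian integration by parts: conditioning on $(A\epsilon, \nu)$, the exponentially weighted estimator $\betahatewQ$ is a smooth function of $B\epsilon$, and Stein's identity rewrites the conditional expected cross term as $\sigma_\epsilon^2\,\e[\Tr(BX \nabla_{B\epsilon}\betahatewQ)\mid A\epsilon,\nu]$, with the Jacobian of the weights $w_{m,Q}$ available in closed form. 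Combining this with the oracle-inequality machinery behind Lemma~\ref{lemmaoraclecorrelated} controls both the effective degrees of freedom and the residual variance of the cross term, delivering an $o(\sqrt n)$ bound in expectation; a parallel argument (conditioning on $B\epsilon$ and decomposing $\tilde\xi$ into its $\nu$- and $A\epsilon$-contributions) handles the other cross term. Slutsky's lemma then converts $\sqrt n M_n \cond \n(0,\sigma_a^2+\sigma_b^2)$ into the conclusion of the theorem.
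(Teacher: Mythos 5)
Your overall architecture is the same as the paper's: decompose $\sigmanuhatQ$ into the oracle quadratic statistic plus a remainder, use independence of $A\epsilon$ and $B\epsilon$ (hence of $\tilde{\xi}$ and $B\epsilon$), verify Lindeberg for the whitened coordinates via \ref{assumptioneffectivesamplesize}, match the variances via \ref{assumptionasymptoticsamplesize}, and difference. That part of your write-up is essentially the paper's proof. Where you diverge is the remainder $R_n$, and your divergence is substantive. The paper disposes of $R_n$ by citing ``the identical line of reasoning'' from Theorems \ref{theoremnullhypothesis} and \ref{theoremalternativehypothesis}: the triangle inequality plus the rate-$\sqrt{n}$ case of Lemma \ref{lemmaoraclecorrelated} gives $\Vert \tilde{Y} - \tilde{X}\betahatewQ \Vert = \Vert \tilde{\xi} \Vert + \op(n^{1/4})$, and after squaring and normalizing the error is declared $\op(1/\sqrt{n})$. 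Your observation that the cross term $\langle \tilde{\xi}, \tilde{X}(\beta - \betahatewQ)\rangle$ is only $\Op(\sqrt{n})\cdot\op(n^{1/4}) = \op(n^{3/4})$ by Cauchy--Schwarz, hence $\op(n^{-1/4})$ after dividing by $\Tr(D^{-1})\asymp n$, is correct --- and it applies verbatim to the paper's own argument, so you have exposed a step the paper asserts without justification rather than introduced a new difficulty. Your proposed repair via Stein's identity is plausible in spirit (the divergence of the exponentially weighted fit is the Leung--Barron effective degrees of freedom, of order $u$ plus a weight-dispersion term, hence $o(\sqrt{n})$ in expectation under \ref{assumptionsparsityalternative}), but as written it is only a sketch: you still owe a concentration argument taking the cross term from its conditional mean to an $\op(\sqrt{n})$ bound in probability, and the second cross term mixes $\nu$ with $A\epsilon$, which makes the conditioning delicate. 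A more elementary route that stays inside the paper's toolbox is to exploit the mixture structure, writing $\tilde{X}(\betahatewQ - \beta) = \sum_{m\in\m_u} w_{m,Q}\,\tilde{X}(\betahatmQ - \beta)$ with each summand confined (up to the weak-sparsity remainder) to a subspace of dimension at most $2u$, so that the cross term is at most $\max_{m}\Vert P_{m\cup S}\tilde{\xi}\Vert \cdot \Vert \tilde{X}(\betahatewQ - \beta)\Vert = \Op(\sqrt{u\log p})\cdot \op(n^{1/4}) = \op(\sqrt{n})$ under \ref{assumptionsparsityalternative}. Either way, your instinct that this term requires a dedicated argument, rather than the bare Cauchy--Schwarz the paper implicitly relies on, is right.
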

	
	From the previous theorem, we immediately have that the following is a $(1-\alpha)$ confidence interval for $\sigma_\nu^2$.
	\begin{align}\label{equationci}
		\left( \frac{1}{\dbar} \left( \sigmanuhatQ - \frac{\sqrt{\sigma_a^2+\sigma_b^2}}{\sqrt{n}} z_{\alpha/2} \right) , \frac{1}{\dbar} \left( \sigmanuhatQ + \frac{\sqrt{\sigma_a^2+\sigma_b^2}}{\sqrt{n}} z_{\alpha/2} \right) \right).
	\end{align}
	
	In general, the values of $\sigma_a^2$, $\sigma_b^2$, and $\dbar$ are unknown quantities and need to be estimated.  The following proposition yields consistent estimators for each of those quantities.
	\begin{proposition}\label{propositionscalingestimation}
		Under the assumptions of Theorem \ref{theoremci},
		\begin{align*}
			&\sigmaahat \defined \frac{2n\sum_{k=1}^{n_a} \left( \sigmanuhat/\hat{d} + d_k^{-1}\sigmaepsilonhat \right)^2 }{\left(\Tr(D^{-1})\right)^2} \conp \sigma_a^2,\\
			&\sigmabhat \defined \frac{2n\left(\sigmaepsilonhat\right)^2}{n_b} \conp \sigma_b^2,\\
			&\hat{d} \defined \frac{n_a}{\Tr(D^{-1})} \conp \dbar.
		\end{align*}
	\end{proposition}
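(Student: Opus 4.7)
The plan is to establish the three limits in increasing order of difficulty, relying on Theorem \ref{theoremci}, Lemma \ref{lemmaoraclecorrelated}, and repeated applications of Slutsky's theorem and the continuous mapping theorem.

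The convergence $\hat{d}\conp \dbar$ is immediate because $\hat{d}=n_a/\Tr(D^{-1})$ is deterministic and the limit is given in assumption \ref{assumptionasymptoticsamplesize}. For $\sigmabhat$, I will reduce to showing $\sigmaepsilonhatQ\conp \sigma_\epsilon^2$, after which $\sigmabhat\conp \sigma_b^2$ follows by continuous mapping together with the deterministic limit $2n\sigma_\epsilon^4/n_b\to \sigma_b^2$ from \ref{assumptionasymptoticsamplesize}. For the reduction, I decompose, using (\ref{modelyden}),
\[
BY - BX\betahatewQ = B\epsilon + B\rem + BX(\beta-\betahatewQ),
\]
and handle each piece separately: $\|B\epsilon\|^2/n_b\conp \sigma_\epsilon^2$ by the law of large numbers since $B$ has orthonormal rows and $\epsilon\sim\n_n(0,\sigma_\epsilon^2 I_n)$; $\|B\rem\|^2\le\|\rem\|^2=o(\sqrt{n})$ by the weak sparsity property at rate $\sqrt{n}$ afforded by \ref{assumptionsparsityalternative}; and $\|BX(\beta-\betahatewQ)\|^2\le 2\|QX\betahatewQ-Q\mu\|^2+2\|\rem\|^2=o_\p(\sqrt{n})$ by Lemma \ref{lemmaoraclecorrelated} applied to the pooled system (\ref{modelyq}), whose noise covariance has $\mathcal{O}(1)$ spectral radius under \ref{assumptiondistnu}, combined with Markov's inequality. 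Cross terms are then controlled by Cauchy-Schwarz, giving $\sigmaepsilonhatQ = \|B\epsilon\|^2/n_b+o_\p(1)\conp \sigma_\epsilon^2$.

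For $\sigmaahat$, I first invoke Theorem \ref{theoremci} together with $\hat{d}\conp \dbar$ (and Slutsky) to obtain $\hat{a}\defined \sigmanuhatQ/\hat{d}\conp \sigma_\nu^2$, and set $\hat{b}\defined \sigmaepsilonhatQ$. Expanding the sum,
\[
\sum_{k=1}^{n_a}(\hat{a}+d_k^{-1}\hat{b})^2 = n_a\hat{a}^2 + 2\hat{a}\hat{b}\Tr(D^{-1}) + \hat{b}^2\Tr(D^{-2}),
\]
so
\[
\sigmaahat = 2\hat{a}^2\cdot \frac{n\,n_a}{(\Tr(D^{-1}))^2} + 4\hat{a}\hat{b}\cdot \frac{n}{\Tr(D^{-1})} + 2\hat{b}^2\cdot \frac{n\Tr(D^{-2})}{(\Tr(D^{-1}))^2}.
\]
Each of the three deterministic scalings converges by assumption \ref{assumptionasymptoticsamplesize} (using $n_a\asymp n$ from \ref{assumptionsparsityalternative} and passing to convergent subsequences as noted in the remark following \ref{assumptionasymptoticsamplesize}); expanding the squared norm in the definition of $\sigma_a^2$ shows that the sum of the three limits matches $\sigma_a^2$ term by term. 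Slutsky's theorem then delivers $\sigmaahat\conp \sigma_a^2$.

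The main obstacle will be the $\sigmaepsilonhat$ reduction, since the consistency of $\betahatewQ$ in Lemma \ref{lemmaoraclecorrelated} is stated for the pooled system and must be carried over to the $B$-block of the prediction error while simultaneously tracking the weak-sparsity remainder $\rem$ at the $\sqrt{n}$ rate; the remaining steps are algebraic bookkeeping under Slutsky's theorem.
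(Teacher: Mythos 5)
Your proposal is correct and follows essentially the same route as the paper: consistency of $\hat{d}$ is read off from \ref{assumptionasymptoticsamplesize}, $\sigmabhat$ reduces to $\sigmaepsilonhatQ \conp \sigma_\epsilon^2$ (which the paper dispatches via the decomposition already established in the proof of Theorem \ref{theoremci}, matching your more explicit treatment of $B\rem$ and $BX(\beta-\betahatewQ)$), and $\sigmaahat$ follows from plugging consistent estimates into the variance expression. The only cosmetic difference is in the last step: the paper shows the ratio $\hat{s}^2_{n_a}/s^2_{n_a}\conp 1$ by perturbing around the true values and applying Cauchy--Schwarz to the cross terms, whereas you expand the quadratic and match deterministic limits term by term after passing to convergent subsequences, which the remark following \ref{assumptionasymptoticsamplesize} already sanctions.
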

	
	Therefore, we may plug in the values of $\sigmaahat$, $\sigmabhat$, and $\dbar$ into equation (\ref{equationci}) to obtain an asymptotic confidence interval.
	
	\section{Empirical Bayes in ANOVA Type Models}\label{sectionestimation}
	
	The motivating example of this problem framework is in terms of the Rasch model, originally proposed by \citeA{rasch1960}.  The model that we consider is slightly different than the classical Rasch model in that we have Gaussian responses as opposed to binary responses.  
	
	As an example of this model, the data that we will consider in Section \ref{sectionrealdata} is from the Trends in Mathematics and Sciences Study (TIMSS), an international study conducted every four years to measure fourth and eighth grade student achievement in mathematics and science.  We will only consider data from the year 2015.  Countries randomly sample a collection of nationally representative schools to take standardized examinations in both mathematics and science, with questions being either multiple choice or constructed response.  Then, each student within schools takes only a subset of the questions on the exams but all questions are answered by some students in each school.  In addition to recording student responses, the data also contains background covariates for schools.  \citeA{martin2016} provides a more detailed description of the methods and procedures employed by TIMSS and more general information about TIMSS is available in \citeA{mullis20years}.
	
	For our analysis, we will only consider multiple choice questions and analyze on the level of school rather than students.  To construct a response variable for school, we compute the proportion of questions answered correctly by students in that school.  Note that, unlike the classical Rasch model, we assume a linear model and, for all schools, we have answers for all questions.  Thus, by the Central Limit Theorem, our response $Y$ will be approximately Gaussian.  The fixed effects design $X$ will be the background covariates for the school and the random effects design $Z$ will be an indicator for the country, with $\nu$ corresponding to the unobserved variability of the countries.  In this example, we do since we have averaged over questions, we do not have any nuisance random effects.  The problem that we will consider in this section is ranking the countries based on mathematical ability and trying to estimate the average number of questions that any particular country will answer correctly.  That is, we would like to estimate $\mu+Z\nu$ for all countries.
	
	\subsection{Model and Motivation}
	The general problem framework that we consider is for $K$-factor ANOVA models.  However, we will derive the results in the setting when $K=2$.  That is, we will consider the model
	\begin{align*}
		Y = \mu + Z\nu + W\gamma + \epsilon.
	\end{align*}
	We do not assume that the design is fully crossed in the random effects.  The goal in the problem is to estimate a subset of the mean vector, $\eta \defined \mu + Z\nu$, since we view the random effects $W$ as nuisance.  However, as the sample size increases, the number of observations per group stays bounded.  In the context of the motivating data example, each country still only answers a finite number of questions as we increase the sample size.  Therefore, it is not possible to consistently estimate the entire vector $\eta$.  A standard approach in the low-dimensional setting would be to use an empirical Bayes estimator by placing a Gaussian prior on both $\nu$ and $\gamma$ (for example, see \citeA{brown2018}), which would then transform the problem into a standard high-dimensional linear mixed model.  Therefore, we will use a $\n_v\left(0_v,\sigma_\nu^2 I_v\right)$ and $\n_r\left(0_r , \sigma_\gamma^2 I_r\right)$ prior on $\nu$ and $\gamma$ respectively.
	
	Since we will need to estimate both $\sigma_\nu^2$ and $\sigma_\gamma^2$ for the prior, our estimator for $\sigma_\gamma^2$ will be analogous to $\sigmanuhat$ from Section \ref{sectionci}.  To this end, we will need an additional matrix $C$ such that 
	\begin{align*}
		CY = CX\beta + CW\gamma + C\epsilon.
	\end{align*}
	Further, $C$ may be chosen such that $C$ has orthonormal rows that are mutually orthogonal with $(A^\T,B^\T)^\T$.  We will let $n_c$ denote the number of rows of $C$.  Similarly, we will need to consider the Spectral Decomposition of $CWW^\T C^\T = \Gamma \Lambda \Gamma^\T$.  Similarly, we have that $\Lambda$ is diagonal and $\Gamma$ is unitary.  Then, we may take the transformation
	\begin{align}\label{modelgamma}
		\Lambda^{-1/2} \Gamma^\T CY = \Lambda^{-1/2} \Gamma^\T X\beta + \Lambda^{-1/2} \Gamma^\T CW\gamma + \Lambda^{-1/2} \Gamma^\T C\epsilon
	\end{align}
	to whiten the data with respect to $\gamma$.
	
	\subsection{Estimator}
	To estimate $\mu$, we will define $\betahatew$ to be the exponentially weighted estimator for model (\ref{modellmmy}), with $\hat{\mu}_\text{EW} \defined X\betahatew$.
	
	From Theorem \ref{theoremci}, we immediately have that $\sigmanuhatQ \conp \dbar \sigma_\nu^2$.  This suggests the empirical prior $\n_v\left( 0_v , \sigmanuhatQ/\hat{d} \right)$ for $\nu$.  However, we will not use this estimator, but a slightly modified version by jointly estimating the vector $\mu$ using all of the observations.  Before we define a modified version of $\sigmanuhatQ$, we will first need to adapt our estimator of $\sigma_\epsilon^2$, which will be given by
	\begin{align*}
		\sigmaepsilonhat \defined \frac{1}{n_b} \left\Vert BY - BX\betahatew \right\Vert^2.
	\end{align*}
	Then, we may define
	\begin{align*}
		\sigmanuhat \defined \frac{1}{\Tr(D^{-1})}\left\Vert D^{-1/2}V^\T AY - D^{-1/2} V^\T AX\betahatew \right\Vert^2 - \sigmaepsilonhat.
	\end{align*}
	We may similarly define $\sigmagammahat$ as
	\begin{align*}
		\sigmagammahat \defined \frac{1}{\Tr(\Lambda^{-1})}\left\Vert \Lambda^{-1/2}\Gamma^\T CY - \Lambda^{-1/2} \Gamma^\T CX\betahatew \right\Vert^2 - \sigmaepsilonhat.
	\end{align*}
	Then, under some assumptions, it will follow that $\sigmagammahat \conp \lambdabar \sigma_\gamma^2$ analogous to Theorem \ref{theoremci} where $\lambdabar$ is defined in assumption \ref{assumptionvarianceconsistency}.  Hence, we will estimate $\sigma_\nu^2$ by $\sigmanuhat/\hat{d}$, $\sigma_\gamma^2$ by $\sigmagammahat/\hat{\lambda}$, and $\sigma_\epsilon^2$ by $\sigmaepsilonhat$.  This will give the following empirical Bayes estimator for $\eta$,
	\begin{align*}
		\etahat \defined \hat{\mu}_\text{EW} + \frac{\sigmanuhat}{\hat{d}} ZZ^\T \left( \frac{\sigmanuhat}{\hat{d}} ZZ^\T + \frac{\sigmagammahat}{\hat{\lambda}}WW^\T + \sigmaepsilonhat I_n \right)^{-1} \left( Y - \hat{\mu}_\text{EW} \right).
	\end{align*}

	To compare our estimator, we will consider an oracle that has access to $\mu$, $\sigma_\nu^2$, $\sigma_\gamma^2$, and $\sigma_\epsilon^2$.  Then, this oracle will use the Bayes estimator for $\eta$ (see Lemma \ref{lemmaeb}), given by
	\begin{align*}
		\etahatoracle \defined \mu + \sigma_\nu^2 ZZ^\T \left( \sigma_\nu^2 ZZ^\T + \sigma_\gamma^2 WW^\T + \sigma_\epsilon^2 I_n \right)^{-1} \left( Y - \mu \right).
	\end{align*}

	\subsection{Assumptions}
	Unlike the previous section, we will not need to establish the asymptotic distribution of $\sigmanuhat$, rather we only need the estimator to be consistent.  Accordingly, we may weaken our assumptions to the following
	
	\begin{enumerate}[label=(C\arabic*)]
		\item \label{assumptionvarianceconsistency} The matrices $D$ and $\Lambda$ satisfy
		\begin{align*}
			&\lim_{n\to\infty} \frac{\Tr\left( D^{-2} \right)}{\Tr \left( D^{-1} \right)^{2}} \to 0 &&& \lim_{n\to\infty} \frac{\Tr\left( \Lambda^{-2} \right)}{\Tr \left( \Lambda^{-1} \right)^{2}} \to 0\\
			&\lim_{n\to\infty} \frac{n_a}{\Tr\left( D^{-1} \right)} \to \dbar &&& \lim_{n\to\infty} \frac{n_c}{\Tr\left( \Lambda^{-1} \right)} \to \lambdabar.
		\end{align*}
		
		\item \label{assumptionsparsityeb} The sequence of constants $u$ satisfies
		\begin{align*}
		&\liminf_{n\to\infty} (u-s) \geq 0,\\
		&\lim_{n\to\infty} \frac{u\log(p)}{n_a \wedge n_b \wedge n_c} \to 0.
		\end{align*}
		
		\item \label{assumptionrandomeffectsizes} The design matrices satisfy
		\begin{align*}
			\left\Vert \sigma_\nu^2 ZZ^\T + \sigma_\gamma^2 WW^\T + \sigma_\epsilon^2 I_n \right\Vert^2 = \mathcal{O}(1).
		\end{align*}
	\end{enumerate}
	
	\begin{remark}
		Assumption \ref{assumptionvarianceconsistency} is weaker than the combination of assumptions \ref{assumptioneffectivesamplesize} and \ref{assumptionasymptoticsamplesize}.  This will ensure consistency in estimating $\sigma_\nu^2$, $\sigma_\gamma^2$, and $\sigma_\epsilon^2$.
		
		The next assumption, \ref{assumptionsparsityeb}, is identical to assumption \ref{assumptionsparsitynull} but with the additional constraint on $n_c$ to ensure consistent prediction of $\Lambda^{-1/2}\Gamma^\T C\mu$ in equation (\ref{modelgamma}).
		
		The last assumption, \ref{assumptionrandomeffectsizes}, is equivalent to, in the context of the motivating example, each school only answering $\mathcal{O}(1)$ number of questions and each question being answered by $\mathcal{O}(1)$ number of schools.
	\end{remark}

	\subsection{Main Results for $\etahat$}
	We will start this section by proving that $\sigmanuhat$, $\sigmagammahat$, and $\sigmaepsilonhat$ are all consistent estimators.
	\begin{proposition}\label{propositionvarianceconsistency}
		Consider the linear mixed model given by equation (\ref{modellmmy}), with the reduced models given in equations (\ref{modelyden}), (\ref{modelyci}), and (\ref{modelgamma}).  Then, assuming \ref{assumptionvarianceconsistency} and \ref{assumptionsparsityeb},
		\begin{align*}
			\sigmanuhat \conp \dbar \sigma_\nu^2,
			&&&\sigmagammahat \conp \lambdabar \sigma_\gamma^2,
			&&\sigmaepsilonhat \conp \sigma_\epsilon^2.
		\end{align*}
	\end{proposition}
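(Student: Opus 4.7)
The plan is to handle the three estimators in parallel by writing each as a dominant chi-squared-type quantity plus a plug-in error, where the plug-in error is controlled by Lemma~\ref{lemmaoraclecorrelated}. The first step is to apply that lemma to the full-data linear model $Y = \mu + (Z\nu + W\gamma + \epsilon)$, whose noise covariance has bounded operator norm under the standing ANOVA hypotheses and whose sparsity and model-size requirements are ensured by \ref{assumptionsparsityeb}. This produces $\e\|X\betahatew - \mu\|^2 = o(n)$, hence $\|X\betahatew - \mu\|^2 = \op(n)$ by Markov's inequality, which serves as the common source of all the plug-in bounds below.

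For $\sigmaepsilonhat$, I would expand
\[
BY - BX\betahatew = B(\mu - X\betahatew) + B\epsilon,
\]
using $BZ = BW = 0$. Since $B$ has orthonormal rows, $B\epsilon \sim \n_{n_b}(0,\sigma_\epsilon^2 I_{n_b})$ and chi-squared concentration gives $\|B\epsilon\|^2/n_b \conp \sigma_\epsilon^2$. Because $\|B\|_{\mathrm{op}} \leq 1$, the squared perturbation is bounded by $\|\mu - X\betahatew\|^2 = \op(n)$, and the cross term by Cauchy--Schwarz; since $n_b \asymp n$, both divide out to $\op(1)$ and yield $\sigmaepsilonhat \conp \sigma_\epsilon^2$.

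For $\sigmanuhat$, set $K = D^{-1/2}V^\T A$; the spectral identity $AZZ^\T A^\T = VDV^\T$ makes $KZ$ have orthonormal rows, and $KW = 0$ since $AW = 0$. Expand
\[
K(Y - X\betahatew) = KZ\nu + K\epsilon + K(\mu - X\betahatew),
\]
whose leading noise $KZ\nu + K\epsilon$ is $\n_{n_a}\bigl(0,\sigma_\nu^2 I_{n_a} + \sigma_\epsilon^2 D^{-1}\bigr)$. A direct computation gives $\var(\|KZ\nu + K\epsilon\|^2)/\Tr(D^{-1})^2 = O\bigl(n_a/\Tr(D^{-1})^2 + 1/\Tr(D^{-1}) + \Tr(D^{-2})/\Tr(D^{-1})^2\bigr) = o(1)$ under \ref{assumptionvarianceconsistency}, so $\|KZ\nu + K\epsilon\|^2/\Tr(D^{-1}) \conp \sigma_\nu^2 \dbar + \sigma_\epsilon^2$. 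Subtracting the consistent $\sigmaepsilonhat$ delivers the stated limit, and the argument for $\sigmagammahat$ is word-for-word identical after substituting $(C,\Lambda,\Gamma,\lambdabar)$ for $(A,D,V,\dbar)$.

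The main obstacle is the plug-in term $\|K(\mu - X\betahatew)\|^2/\Tr(D^{-1})$, because $K$ is an ill-conditioned whitening map. The operator-norm bound $\|K\|_{\mathrm{op}}^2 = \lambda_{\max}(D^{-1}) \leq \sqrt{\Tr(D^{-2})} = o\bigl(\Tr(D^{-1})\bigr)$, which follows from \ref{assumptionvarianceconsistency}, reduces the task to showing that the product $o(1)\cdot \op(n)$ is $\op(1)$. In the balanced ANOVA regime this holds directly because $\lambda_{\max}(D^{-1})/\Tr(D^{-1}) \asymp 1/n_a$ and $\|\mu - X\betahatew\|^2 = \op(n)$; more generally, one can apply Lemma~\ref{lemmaoraclecorrelated} to the transformed models (\ref{modelyci}) and (\ref{modelgamma}) themselves---whose noise covariances $\sigma_\nu^2 I + \sigma_\epsilon^2 D^{-1}$ and $\sigma_\gamma^2 I + \sigma_\epsilon^2 \Lambda^{-1}$ have bounded operator norm and whose sample sizes $n_a$ and $n_c$ satisfy the sparsity condition via \ref{assumptionsparsityeb}---to sharpen the prediction error to $\op(n_a)$ and $\op(n_c)$ respectively. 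Assembling these bounds for all three transformed residuals and invoking the chi-squared concentration above completes the proof of simultaneous consistency.
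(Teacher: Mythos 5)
Your proposal follows essentially the same route as the paper's proof: write each estimator as a dominant quadratic form in Gaussians plus a plug-in error from exponential weighting, reduce the whitened quadratic form to $\sum_k(\sigma_\nu^2+d_k^{-1}\sigma_\epsilon^2)\zeta_k/\Tr(D^{-1})$ with $\zeta_k\iid\chi^2_1$, and conclude by a law of large numbers together with the variance bound $\Tr(D^{-2})/\Tr(D^{-1})^2\to 0$ from \ref{assumptionvarianceconsistency}. The one place you go beyond the paper is the plug-in term $\Vert D^{-1/2}V^\T A(\mu-X\betahatew)\Vert^2/\Tr(D^{-1})$, which the paper absorbs into an unexplained $\op(1)$; you are right to flag it, but note that neither of your two fixes fully closes under the stated hypotheses: the operator-norm route leaves you with $o(1)\cdot\op(n)$, which is only $\op(1)$ when $n_a\asymp n$ (not implied by \ref{assumptionsparsityeb}), and the route through Lemma \ref{lemmaoraclecorrelated} applied to the whitened model (\ref{modelyci}) requires $\lambda_{\max}(D^{-1})=\mathcal{O}(1)$ for the bounded-covariance hypothesis of that lemma, which does not follow from \ref{assumptionvarianceconsistency}, and moreover would pertain to an exponential-weighting estimator built on the whitened data rather than the $\betahatew$ actually defined on model (\ref{modellmmy}). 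So your argument needs an implicit balancedness/conditioning assumption on $D$ at this step --- but so does the paper's.
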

	
	The following is a standard lemma regarding the empirical Bayes estimators in this problem setup, which we will prove for the sake of completeness.
	\begin{lemma}\label{lemmaeb}
		For a fixed vector $\mu \in \R^n$ and fixed values $\sigma_\nu^2>0$, $\sigma_\gamma^2>0$, and $\sigma_\epsilon^2 >0$, the Bayes estimator of $\eta$ is given by
		\begin{align*}
			\e\left( \eta | Y \right) = \mu + \sigma_\nu^2 ZZ^\T \left( \sigma_\nu^2 ZZ^\T + \sigma_\gamma^2 WW^\T + \sigma_\epsilon^2 I_n \right)^{-1} \left( Y - \mu \right).
		\end{align*}
	\end{lemma}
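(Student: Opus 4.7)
The plan is to apply the standard Gaussian conditional expectation formula to the joint distribution of $(\eta, Y)$. Since $\nu$, $\gamma$, and $\epsilon$ are mutually independent Gaussians and both $\eta = \mu + Z\nu$ and $Y = \mu + Z\nu + W\gamma + \epsilon$ are affine functions of $(\nu, \gamma, \epsilon)$, the pair $(\eta, Y)$ is jointly Gaussian with means $\e[\eta] = \mu$ and $\e[Y] = \mu$.

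Next I would compute the three relevant covariance blocks. Using the independence of $\nu$, $\gamma$, $\epsilon$ and the given prior variances,
\begin{align*}
    \cov(\eta) &= Z\cov(\nu)Z^\T = \sigma_\nu^2 ZZ^\T,\\
    \cov(Y) &= \sigma_\nu^2 ZZ^\T + \sigma_\gamma^2 WW^\T + \sigma_\epsilon^2 I_n,\\
    \cov(\eta,Y) &= \cov(Z\nu,\, Z\nu + W\gamma + \epsilon) = \sigma_\nu^2 ZZ^\T,
\end{align*}
where the cross-terms in the last line vanish by independence.

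Finally, since $\sigma_\epsilon^2 > 0$, the covariance matrix $\cov(Y)$ is positive definite and hence invertible. Applying the standard formula $\e(\eta \mid Y) = \e[\eta] + \cov(\eta,Y)\cov(Y)^{-1}(Y - \e[Y])$ for jointly Gaussian vectors yields exactly the stated expression. There is no real obstacle here; the result is a direct instance of the Gaussian regression formula, and the lemma is included only to fix notation and to justify the choice of $\etahatoracle$ as the target of comparison for $\etahat$.
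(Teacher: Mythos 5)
Your proposal is correct and matches the paper's own argument essentially line for line: both write down the joint Gaussian law of $(\eta,Y)$ with mean $(\mu,\mu)$ and covariance blocks $\sigma_\nu^2 ZZ^\T$ and $\sigma_\nu^2 ZZ^\T + \sigma_\gamma^2 WW^\T + \sigma_\epsilon^2 I_n$, then invoke the standard Gaussian conditional-mean formula. No differences worth noting.
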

	
	We conclude this section with the main result regarding $\etahat$, that the empirical Bayes estimator performs nearly as well as the oracle Bayes estimator $\etahatoracle$ asymptotically.
	\begin{theorem}\label{theoremeboracle}
		Consider the linear mixed model given in (\ref{modellmmy}).  Assume \ref{assumptionvarianceconsistency}, \ref{assumptionsparsityeb}, and \ref{assumptionrandomeffectsizes}.  Then,
		\begin{align*}
			\frac{1}{n} \left( \left\Vert \etahat - \eta \right\Vert^2 - \left\Vert \etahatoracle - \eta \right\Vert^2 \right) = \op(1).
		\end{align*}
	\end{theorem}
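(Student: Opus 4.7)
The plan is to use the polarization identity
\begin{align*}
\left\Vert \etahat - \eta \right\Vert^2 - \left\Vert \etahatoracle - \eta \right\Vert^2 = \left\Vert \etahat - \etahatoracle \right\Vert^2 + 2\langle \etahat - \etahatoracle,\, \etahatoracle - \eta\rangle,
\end{align*}
after which it suffices to show $\|\etahat - \etahatoracle\|^2 = \op(n)$ and $\|\etahatoracle - \eta\|^2 = \Op(n)$, since Cauchy-Schwarz then bounds the cross term by $\op(n)$. To expose the right quantities, write $M \defined \sigma_\nu^2 ZZ^\T + \sigma_\gamma^2 WW^\T + \sigma_\epsilon^2 I_n$, $\hat M \defined (\sigmanuhat/\hat d)ZZ^\T + (\sigmagammahat/\hat\lambda)WW^\T + \sigmaepsilonhat I_n$, $S \defined \sigma_\nu^2 ZZ^\T M^{-1}$, and $\hat S \defined (\sigmanuhat/\hat d) ZZ^\T \hat M^{-1}$. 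Adding and subtracting $\hat S(Y - \mu)$ inside $\etahat - \etahatoracle$ gives the decomposition
\begin{align*}
\etahat - \etahatoracle = (I - \hat S)(\hat{\mu}_{\text{EW}} - \mu) + (\hat S - S)(Y - \mu).
\end{align*}

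For the oracle term, Lemma \ref{lemmaeb} identifies $\etahatoracle = \e[\eta\mid Y]$, so $\e\|\etahatoracle - \eta\|^2 = \Tr(\mathrm{Var}(\eta\mid Y)) \leq \Tr(\sigma_\nu^2 ZZ^\T) = O(n)$ by \ref{assumptionrandomeffectsizes}, and Markov yields $\|\etahatoracle - \eta\|^2 = \Op(n)$. For the first summand of the decomposition, I would apply Lemma \ref{lemmaoraclecorrelated} to the marginal linear model $Y = \mu + (Z\nu + W\gamma + \epsilon)$: its noise covariance $M$ has $\lambda_{\max}(M) = O(1)$ by \ref{assumptionrandomeffectsizes} and $u\log p = o(n)$ by \ref{assumptionsparsityeb}, giving $\|\hat{\mu}_{\text{EW}} - \mu\|^2 = \op(n)$. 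The prefactor $\|I - \hat S\|$ is $\Op(1)$ because $\lambda_{\min}(\hat M) \geq \sigmaepsilonhat$ is bounded away from zero with probability tending to one by Proposition \ref{propositionvarianceconsistency}, so $\|(I - \hat S)(\hat{\mu}_{\text{EW}} - \mu)\|^2 = \op(n)$.

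The heart of the argument is the second summand, which needs $\|\hat S - S\| = \op(1)$ in operator norm. Proposition \ref{propositionvarianceconsistency} furnishes $\sigmanuhat/\hat d \conp \sigma_\nu^2$, $\sigmagammahat/\hat\lambda \conp \sigma_\gamma^2$, and $\sigmaepsilonhat \conp \sigma_\epsilon^2$; combined with $\|ZZ^\T\|, \|WW^\T\| = O(1)$ from \ref{assumptionrandomeffectsizes}, this delivers $\|\hat M - M\| = \op(1)$. Since $\|M^{-1}\| \leq 1/\sigma_\epsilon^2$ and $\|\hat M^{-1}\|$ is $\Op(1)$ by the same spectral lower bound on $\hat M$, the identity $\hat M^{-1} - M^{-1} = -\hat M^{-1}(\hat M - M)M^{-1}$ propagates to $\|\hat S - S\| = \op(1)$. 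Because $\e\|Y - \mu\|^2 \leq \Tr(M) + \|\rem\|^2 = O(n)$, this finishes the piece: $\|(\hat S - S)(Y - \mu)\|^2 \leq \|\hat S - S\|^2\|Y - \mu\|^2 = \op(1)\cdot\Op(n) = \op(n)$. The main technical obstacle is precisely this lifting step, packaging three scalar consistencies, a uniform spectral lower bound on $\hat M$, and stability of matrix inversion into one operator-norm statement; the remaining pieces reduce to direct applications of Lemmas \ref{lemmaeb} and \ref{lemmaoraclecorrelated} together with standard Gaussian second-moment bounds.
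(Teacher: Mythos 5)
Your proposal is correct and follows essentially the same route as the paper's proof: the same polarization-plus-Cauchy--Schwarz reduction, the same use of Lemma \ref{lemmaoraclecorrelated} applied to the marginal model $Y = \mu + (Z\nu + W\gamma + \epsilon)$ to get $\|\hat{\mu}_{\text{EW}} - \mu\|^2 = \op(n)$, Proposition \ref{propositionvarianceconsistency} for the variance components, and \ref{assumptionrandomeffectsizes} for the operator-norm bounds. The only differences are cosmetic --- you control $\hat{M}^{-1} - M^{-1}$ via the resolvent identity where the paper applies the Matrix Inversion Lemma twice, and you bound $\|\etahatoracle - \eta\|^2$ by $\Tr(\mathrm{Var}(\eta \mid Y))$ where the paper uses a direct triangle-inequality estimate --- both slightly tidier, though your claim $\lambda_{\min}(\hat{M}) \geq \sigmaepsilonhat$ tacitly assumes the estimated variance components are nonnegative, and (as in the paper) the spectral lower bound on $\hat{M}$ really only holds with probability tending to one, via $\hat{M} \succeq M - \|\hat{M} - M\| I_n$.
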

	
	\section{Implementation and Simulations}\label{sectionnumerics}
	\subsection{Implementation}
	
	In this section, we describe how to compute all three of the proposed estimators.  The computation can be split into two halves:
	\begin{enumerate}
		\item Obtaining the matrices $A$, $B$, $C$, $D$, $V$, $\Lambda$, and $\Gamma$ with which to project the data.
		\item Computing the estimators via exponential weighting.
	\end{enumerate}
	
	For the first part, the matrices $A$, $B$, and $C$ can be obtained by adding the $n$ standard basis vectors in $\R^n$ to $(W,Z)$ to form a basis for $\R^n$ and applying the modified Gram-Schmidt procedure.  To compute $D$ and $V$, we form the matrices $AZZ^\T A^\T$ and directly apply the eigendecomposition to obtain both the eigenvalues and the eigenvectors.  An analogous procedure is used to compute $\Lambda$ and $\Gamma$.
	
	For the second part, it may be further sub-divided into three parts:
	\begin{enumerate}
		\item Determining a bound $\alpha$.
		\item Determining a value of $u$.
		\item Computing $|\m_u| = {p \choose u} = \mathcal{O}(p^u)$ models.
	\end{enumerate}
	To determine $\alpha$, we only need an upper bound on the maximal eigenvalue of the noise vector.  This may simply be bounded by the variance of $Y$ by choosing $\alpha = 4 \left\Vert Y \right\Vert^2/n$.  Though this will choose a conservative value of $\alpha$, asymptotically this will have no effect.  For $u$, we may following the suggestion of \citeA{rigollet2011} and apply the exponential screening prior.  They say that a covariate $X_j$ is \emph{selected} if $\hat{\beta}_\text{ES,j} > 1/n$, where $\hat{\beta}_\text{ES,j}$ is the $j$'th entry of the exponential screening estimator.  Note that, after computing the matrix $B$, the model $BY = BX\beta + B\epsilon$ is a standard high-dimensional linear model with the same level of sparsity as the original model, which is in the framework of \citeA{rigollet2011}.  Alternatively, we may apply cross-validation to tune the sparsity level, though this is very computationally intensive.
	
	Once both are selected, we still need to compute ${p \choose u}$ least-squares estimators.  However, \citeA{rigollet2011} and \citeA{rigollet2012} proposed a Metropolis Hastings approach to approximating the exponential weighting estimator.  We will refer the interested reader to \citeA{rigollet2012} for the details.

	\subsection{Simulations}
	\subsubsection{Methods and Models}
	
	We will consider the linear mixed model given by
	\begin{align*}
		Y = X\beta + Z\nu + W\gamma + \epsilon,
	\end{align*}
	with $n = 200$, $s=3$, and $p=500$.  For simplicity, we will set $\mu = X\beta$.  Since no assumption is made on the fixed effects design $X$, we will generate $X$ from a multivariate Gaussian $\n_n\left(0_n, \Sigma\right)$, where $\Sigma$ is an equi-correlation matrix.  That is,
	\begin{align*}
		\Sigma_{i,j} = 
		\begin{cases}
			1 \text{ if } i=j\\
			\rho \text{ if } i\neq j
		\end{cases}
	\end{align*}
	for $\rho \in \{0,0.8\}$.  As our fixed effects design is perfectly symmetric, we will take the strongly sparse set $S$ to be the first $s$ components with $\beta_j = 1$ for $j\in S$.  We will fix the value of $\sigma_\epsilon^2=1$.  For simplicity, we will let $\nu$ and $\gamma$ each correspond to a single random effect that are independent of each other.  Therefore, we will have that
	\begin{align*}
		\begin{pmatrix}
			\nu \\ \gamma \\ \epsilon
		\end{pmatrix} \sim 
		\n_{v+r+n} \left( 
		\begin{pmatrix}
			0_v \\ 0_r \\ 0_n
		\end{pmatrix} , 
		\begin{pmatrix}
			\sigma_\nu^2 I_v & 0_{v\times r} & 0_{v\times n}\\
			0_{r\times v} & \sigma_\gamma^2 I_r & 0_{r\times n}\\
			0_{n\times v} & 0_{n\times r} & \sigma_\epsilon^2 I_n
		\end{pmatrix}
		\right).
	\end{align*}
	Finally, with regards to the random effects design, we will consider the setting where $(v,r)\in \{25,50\}^2$.  Since the number of observations will be less than the number of possible combinations, we will start by generating a fully crossed design and then down sample $n$ rows without replacement of the fully crossed design.  To experiment with both the level and the power of our procedure, we will consider $(\sigma_\nu^2,\sigma_\gamma^2) \in \{0,1\}^2$.  Hence, we will consider each of the following settings
	\begin{align*}
		\left(\rho,v,r,\sigma_\nu^2,\sigma_\gamma^2\right) \in \{0,0.8\}\times \{25,50\} \times \{25,50\} \times \{0,1\} \times \{0,1\}.
	\end{align*}
	
	For each setting, we will generate a single draw of the design for $(X,W,Z)$.  Then, we will draw $N=100$ trials of $(\nu,\gamma,\epsilon)$ and compute each of our estimators on these trials.  Whenever a variance estimate is negative, we will set the estimated value to zero.  When using exponential weighting, we will estimate the sparsity using the exponential screening prior of \citeA{rigollet2011} on the model $BY = BX\beta + B\epsilon$.  Moreover, we will use the value of $\alpha = 4\left\Vert Y \right\Vert^2/n$.
	
	All of our simulations are conducted in R.  For each of our three problems, we will compare our estimator with an oracle low-dimensional estimator as well as a low-dimensional version of our proposed high-dimensional statistic.  For the oracle estimators, in the setting of the $F$-test, we directly apply the classical low-dimensional $F$-test that has access to the true sparse set $S$, as given in equation (2.3) of \citeA{jiang2007}, which we will denote by $\fld$.  For the confidence intervals, we fit the linear mixed models with the true sparse set $S$ using {\tt lmer} and applying the {\tt confint} function.  We will denote this estimator as $\sigmanuhatld$.  Finally, in the setting of estimation, we directly compute the oracle Bayes estimator $\etahatoracle$ described in Section \ref{sectionestimation}.  Collectively, things subscripted with ``LD'' will denote low-dimensional estimators.
	
	In addition to comparing with the low-dimensional estimators, we also construct low-dimensional versions of our proposed high-dimensional statistics.  To do so, we use the exact same statistic as in the high-dimensional setting but replace exponential weighting with least-squares using the sparse set $S$.  Hence, all of these statistics will be subscripted by ``LS'' to denote the usage of least-squares.  We make this comparison since all of our proposed statistics rely on two layers of asymptotics:  
	\begin{enumerate}
		\item In the prediction of the mean vector via exponential weighting.
		\item In the convergence once the residuals are obtained.
	\end{enumerate}
	To differentiate between these two, we introduce an intermediate statistic that relies on least-squares, which we think of as low-dimensional versions of our statistics.  For example, recalling that $\hat{\beta}_S$ is the least-squares estimator of $\beta$ using the covariates $X_S$, we will also consider the statistic
	\begin{align*}
		\fls \defined \frac{\left\Vert AY - AX\hat{\beta}_{S} \right\Vert^2/{n_a}}{\left\Vert BY - BX\hat{\beta}_{S} \right\Vert^2/{n_b}} \sim F_{n_a,n_b} + \op(1).
	\end{align*}
	Similarly, we will consider $\sigmanuhatls$ and $\sigmaepsilonhatls$ given by
	\begin{align*}
		&\sigmaepsilonhatls \defined \frac{1}{n_b} \left\Vert B\epsilon - BX\hat{\beta}_{S} \right\Vert^2,\\
		&\sigmanuhatls \defined \frac{1}{\Tr(D^{-1})}\left\Vert D^{-1/2}V^\T AY - D^{-1/2} V^\T AX\hat{\beta}_S \right\Vert^2 - \sigmaepsilonhatls,\\
		&\sigmagammahatls \defined \frac{1}{\Tr(\Lambda^{-1})}\left\Vert \Lambda^{-1/2}\Gamma^\T CY - \Lambda^{-1/2} \Gamma^\T CX\hat{\beta}_S \right\Vert^2 - \sigmaepsilonhatls.
	\end{align*}
	Then, we will have that
	\begin{align*}
		\sqrt{n} \left( \sigmanuhatls - \dbar\sigma_\nu^2 \right) \cond \n(0,\sigma_a^2 + \sigma_b^2)
	\end{align*}
	and
	\begin{align*}
		\etahatls \defined X\hat{\beta}_S + \frac{\sigmanuhatls}{\hat{d}} ZZ^\T \left( \frac{\sigmanuhatls}{\hat{d}} ZZ^\T + \frac{\sigmagammahatls}{\hat{\lambda}}WW^\T + \sigmaepsilonhatls I_n \right)^{-1} \left( Y - X\hat{\beta}_S \right).
	\end{align*}
	To compare the three procedures, we will consider the following metrics
	\begin{enumerate}
		\item Average Coverage:  The percentage of time the correct hypothesis is selected for $F$-tests or the percentage of time the true value of $\sigma_\nu^2$ is in the confidence interval.
		\item Average Length:  The average length of the confidence interval, taken as the upper endpoint minus the lower endpoint.
		\item Average Loss:  The average squared Euclidean distance between the estimated vector $\hat{\eta}$ and the true vector $\eta$ divided by $n$.
	\end{enumerate}
	
	\subsubsection{Results}
	Tables (\ref{tablesimulationgimanu0sigmagamma0}) - (\ref{tablesimulationgimanu1sigmagamma1}) provide the results of our simulations.  We notice that, on average, $\few$ and $\sigmanuhat$ are slightly less conservative than $\fld$ and $\sigmanuhatld$ respectively.  However, as $v$ increases from $25$ to $50$, we generally notice that type I error decreases and coverage increases, suggesting that the main problem that is asymptotic.  In particular, even with the correct low-dimensional covariates, $\fls$ and $\sigmanuhatls$ still have higher type I error and lower coverage but does outperform both $\few$ and $\sigmanuhat$, as expected.  However, the confidence intervals for $\sigmanuhat$ seem to be overly optimistic when the design is more correlated and the true value of $\sigma_\nu^2=1$.  In regards to estimation of $\eta$, the loss for $\etahat$ is generally close to the oracle loss, as predicted by Theorem \ref{theoremeboracle}.
	
	Moreover, Figures (\ref{plotsimulationsigmanu0sigmagamma0}) and (\ref{plotsimulationsigmanu1sigmagamma0}) show some simulated confidence intervals for $10$ trials.  The fact that $\fls$ and $\few$ occasionally have shorter confidence intervals than profile likelihood might be attributable to the more optimistic nature of those two procedures.
	
	\begin{table}
		\centering
		\caption{Simulations when $\sigma_\nu^2 = 0$ and $\sigma_\gamma^2 =0$}
		\label{tablesimulationgimanu0sigmagamma0}
		\begin{tabular}{|l|l|rrrrrrrr|}
			\hline
			& $\rho$ & 0 & 0 & 0 & 0 & 0.8 & 0.8 & 0.8 & 0.8 \\ \hline
			& $v$ & 25 & 25 & 50 & 50 & 25 & 25 & 50 & 50 \\ \hline
			& $r$ & 25 & 50 & 25 & 50 & 25 & 50 & 25 & 50 \\ \hline
			& $\fld$ & 92 & 91 & 94 & 98 & 95 & 95 & 99 & 93 \\ 
			AveCov & $\fls$ & 92 & 91 & 95 & 98 & 96 & 95 & 97 & 94 \\ 
			& $\few$ & 91 & 92 & 95 & 84 & 93 & 100 & 82 & 99 \\  \hline
			& $\sigmanuhatld$ & 97 & 99 & 99 & 100 & 98 & 96 & 99 & 97 \\ 
			AveCov & $\sigmanuhatls$ & 100 & 100 & 98 & 98 & 100 & 100 & 99 & 99 \\ 
			& $\sigmanuhat$ & 100 & 100 & 100 & 99 & 100 & 100 & 97 & 100 \\  \hline
			& $\sigmanuhatld$ & 0.33 & 0.32 & 0.31 & 0.34 & 0.33 & 0.31 & 0.36 & 0.35 \\  
			AveLen & $\sigmanuhatls$ & 0.12 & 0.16 & 0.26 & 0.43 & 0.13 & 0.15 & 0.25 & 0.35 \\  
			& $\sigmanuhat$ & 0.12 & 0.17 & 0.29 & 0.84 & 0.12 & 0.13 & 0.26 & 0.34 \\ \hline
			& $\etahatoracle$ & 0 & 0 & 0 & 0 & 0 & 0 & 0 & 0 \\ 
			AveLoss & $\etahatls$ & 0.02 & 0.02 & 0.03 & 0.03 & 0.02 & 0.02 & 0.02 & 0.03 \\ 
			& $\etahat$  & 0.07 & 0.15 & 0.23 & 1.00 & 0.32 & 0.32 & 0.32 & 0.39 \\ 
			\hline
		\end{tabular}
	\end{table}

	\begin{table}
		\centering
		\caption{Simulations when $\sigma_\nu^2 = 0$ and $\sigma_\gamma^2 =1$}
		\label{tablesimulationgimanu0sigmagamma1}
		\begin{tabular}{|l|l|rrrrrrrr|}
			\hline
			& $\rho$ & 0 & 0 & 0 & 0 & 0.8 & 0.8 & 0.8 & 0.8 \\ \hline
			& $v$ & 25 & 25 & 50 & 50 & 25 & 25 & 50 & 50 \\ \hline
			& $r$ & 25 & 50 & 25 & 50 & 25 & 50 & 25 & 50 \\ \hline
			& $\fld$ & 95 & 96 & 96 & 94 & 94 & 93 & 95 & 97 \\ 
			AveCov & $\fls$ & 96 & 96 & 97 & 94 & 95 & 92 & 97 & 98 \\ 
			& $\few$ & 95 & 98 & 95 & 97 & 93 & 99 & 99 & 99 \\  \hline
			& $\sigmanuhatld$ & 97 & 99 & 100 & 96 & 97 & 96 & 96 & 99 \\ 
			AveCov & $\sigmanuhatls$ & 100 & 100 & 100 & 96 & 100 & 100 & 100 & 100 \\ 
			& $\sigmanuhat$ & 100 & 100 & 99 & 98 & 100 & 100 & 99 & 99 \\  \hline
			& $\sigmanuhatld$ & 0.34 & 0.35 & 0.4 & 0.4 & 0.34 & 0.38 & 0.39 & 0.39 \\ 
			AveLen & $\sigmanuhatls$ & 0.20 & 0.16 & 0.22 & 0.32 & 0.12 & 0.16 & 0.30 & 0.36 \\ 
			& $\sigmanuhat$ & 0.19 & 0.17 & 0.22 & 0.55 & 0.12 & 0.14 & 0.28 & 0.35 \\  \hline
			& $\etahatoracle$ & 0 & 0 & 0 & 0 & 0 & 0 & 0 & 0 \\ 
			AveLoss & $\etahatls$ & 0.04 & 0.03 & 0.04 & 0.04 & 0.04 & 0.04 & 0.04 & 0.05 \\ 
			& $\etahat$ & 0.12 & 0.28 & 0.19 & 1.07 & 0.41 & 0.39 & 0.39 & 0.38 \\ 
			\hline
		\end{tabular}
	\end{table}
	
	\begin{table}
		\centering
		\caption{Simulations when $\sigma_\nu^2 = 1$ and $\sigma_\gamma^2 =0$}
		\label{tablesimulationgimanu1sigmagamma0}
		\begin{tabular}{|l|l|rrrrrrrr|}
			\hline
			& $\rho$ & 0 & 0 & 0 & 0 & 0.8 & 0.8 & 0.8 & 0.8 \\ \hline
			& $v$ & 25 & 25 & 50 & 50 & 25 & 25 & 50 & 50 \\ \hline
			& $r$ & 25 & 50 & 25 & 50 & 25 & 50 & 25 & 50 \\ \hline
			& $\fld$ & 100 & 100 & 100 & 100 & 100 & 100 & 100 & 100 \\ 
			AveCov & $\fls$ & 100 & 100 & 100 & 100 & 100 & 100 & 100 & 100 \\ 
			& $\few$ & 100 & 100 & 98 & 97 & 100 & 100 & 100 & 100 \\  \hline
			& $\sigmanuhatld$ & 96 & 95 & 96 & 95 & 95 & 94 & 96 & 96 \\ 
			AveCov & $\sigmanuhatls$ & 87 & 92 & 93 & 91 & 91 & 85 & 94 & 96 \\ 
			& $\sigmanuhat$ & 80 & 87 & 89 & 95 & 62 & 42 & 51 & 73 \\ \hline
			& $\sigmanuhatld$ & 0.64 & 0.66 & 0.52 & 0.52 & 0.67 & 0.63 & 0.52 & 0.53 \\ 
			AveLen & $\sigmanuhatls$ & 1.22 & 1.34 & 1.12 & 1.31 & 1.32 & 1.19 & 1.05 & 1.27 \\ 
			& $\sigmanuhat$ & 1.10 & 1.26 & 1.18 & 1.70 & 0.92 & 0.83 & 0.82 & 1.02 \\  \hline
			& $\etahatoracle$ & 0.11 & 0.12 & 0.19 & 0.19 & 0.11 & 0.11 & 0.19 & 0.19 \\ 
			AveLoss & $\etahatls$ & 0.14 & 0.14 & 0.22 & 0.22 & 0.14 & 0.13 & 0.22 & 0.21 \\ 
			& $\etahat$ & 0.25 & 0.54 & 0.70 & 1.07 & 0.41 & 0.41 & 0.51 & 0.44 \\ 
			\hline
		\end{tabular}
	\end{table}
	
	\begin{table}
		\centering
		\caption{Simulations when $\sigma_\nu^2 = 1$ and $\sigma_\gamma^2 =1$}
		\label{tablesimulationgimanu1sigmagamma1}
		\begin{tabular}{|l|l|rrrrrrrr|}
			\hline
			& $\rho$ & 0 & 0 & 0 & 0 & 0.8 & 0.8 & 0.8 & 0.8 \\ \hline
			& $v$ & 25 & 25 & 50 & 50 & 25 & 25 & 50 & 50 \\ \hline
			& $r$ & 25 & 50 & 25 & 50 & 25 & 50 & 25 & 50 \\ \hline
			& $\fld$ & 100 & 100 & 100 & 100 & 100 & 100 & 100 & 100 \\ 
			AveCov & $\fls$ & 100 & 100 & 100 & 100 & 100 & 100 & 100 & 99 \\ 
			& $\few$ & 99 & 100 & 100 & 94 & 100 & 100 & 100 & 98 \\ \hline
			& $\sigmanuhatld$ & 93 & 98 & 98 & 95 & 97 & 96 & 95 & 96 \\ 
			AveCov & $\sigmanuhatls$ & 89 & 95 & 95 & 94 & 87 & 88 & 93 & 88 \\ 
			& $\sigmanuhat$ & 81 & 82 & 87 & 79 & 59 & 43 & 66 & 57 \\ \hline
			& $\sigmanuhatld$ & 0.68 & 0.68 & 0.69 & 0.55 & 0.68 & 0.68 & 0.56 & 0.57 \\ 
			AveLen & $\sigmanuhatls$ & 1.31 & 1.32 & 1.15 & 1.12 & 1.22 & 1.28 & 1.13 & 1.19 \\ 
			& $\sigmanuhat$ & 1.20 & 1.17 & 1.10 & 1.20 & 0.88 & 0.87 & 0.89 & 0.93 \\  \hline
			& $\etahatoracle$ & 0.13 & 0.14 & 0.22 & 0.24 & 0.14 & 0.14 & 0.22 & 0.23 \\ 
			AveLoss & $\etahatls$ & 0.18 & 0.18 & 0.27 & 0.28 & 0.19 & 0.19 & 0.26 & 0.28 \\ 
			& $\etahat$ & 0.39 & 0.41 & 0.46 & 0.97 & 0.50 & 0.52 & 0.52 & 0.59 \\ 
			\hline
		\end{tabular}
	\end{table}
	
	\begin{figure}
		\caption{Plots of the estimated confidence intervals in the first $10$ simulations when $\sigma_\nu^2 =0$ and $\sigma_\gamma^2 = 0$}
		\label{plotsimulationsigmanu0sigmagamma0}
		\begin{subfigure}{.49\textwidth}
			\centering
			\includegraphics[scale=0.275]{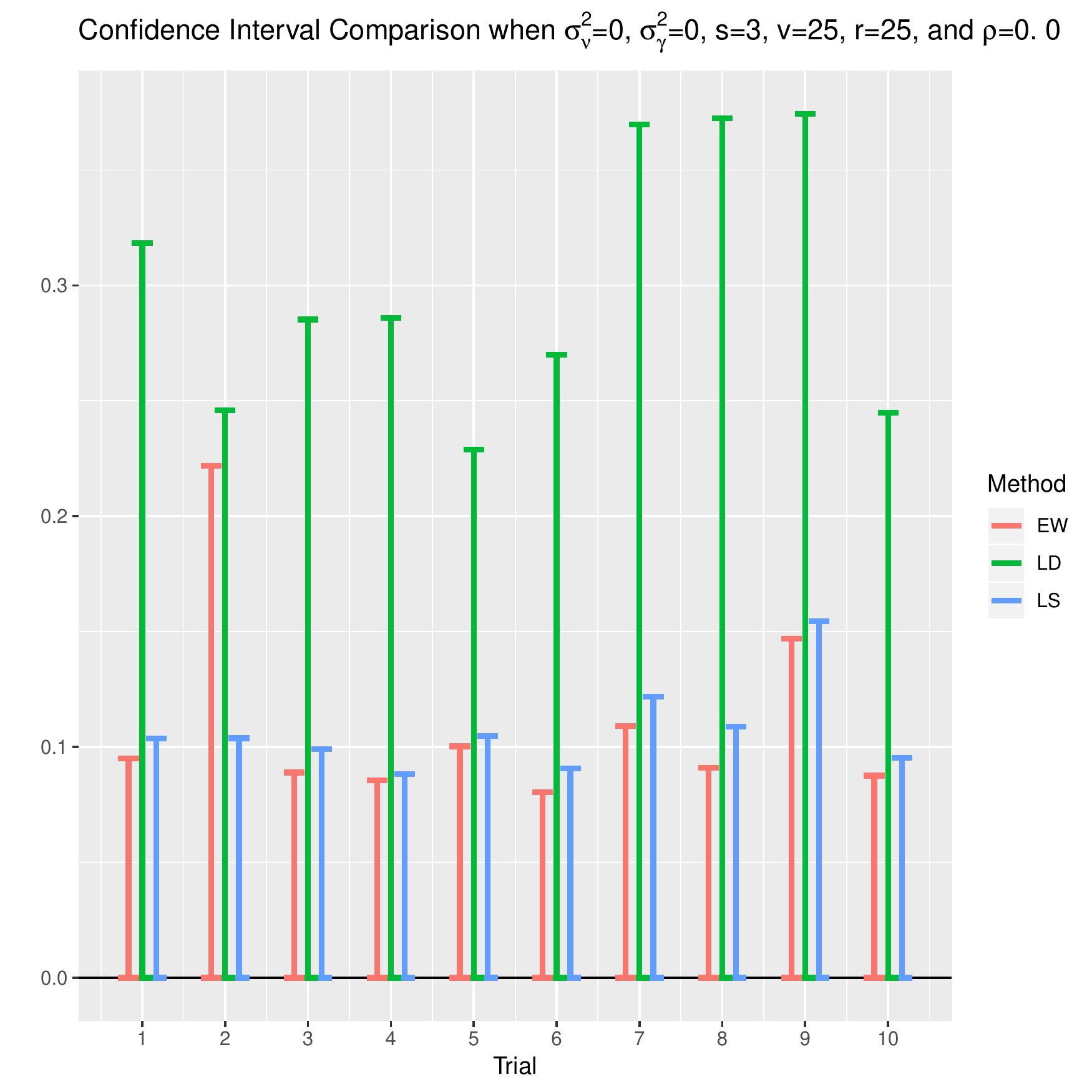}
			\caption{$\rho = 0$, $v=25$, $r=25$}
		\end{subfigure}
		\begin{subfigure}{.49\textwidth}
			\centering
			\includegraphics[scale=0.275]{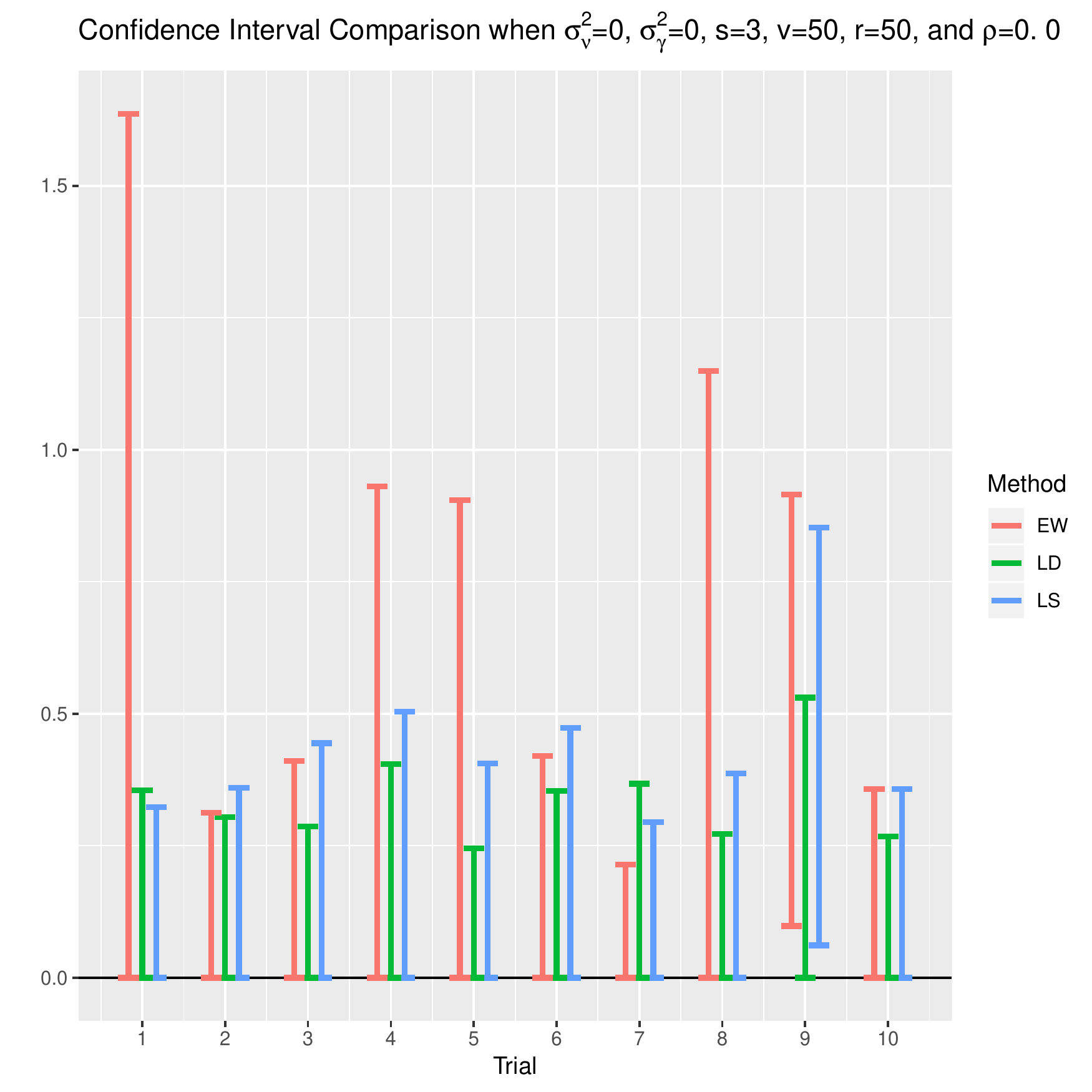}
			\caption{$\rho = 0$, $v=50$, $r=50$}
		\end{subfigure}
		\begin{subfigure}{.49\textwidth}
			\centering
			\includegraphics[scale=0.275]{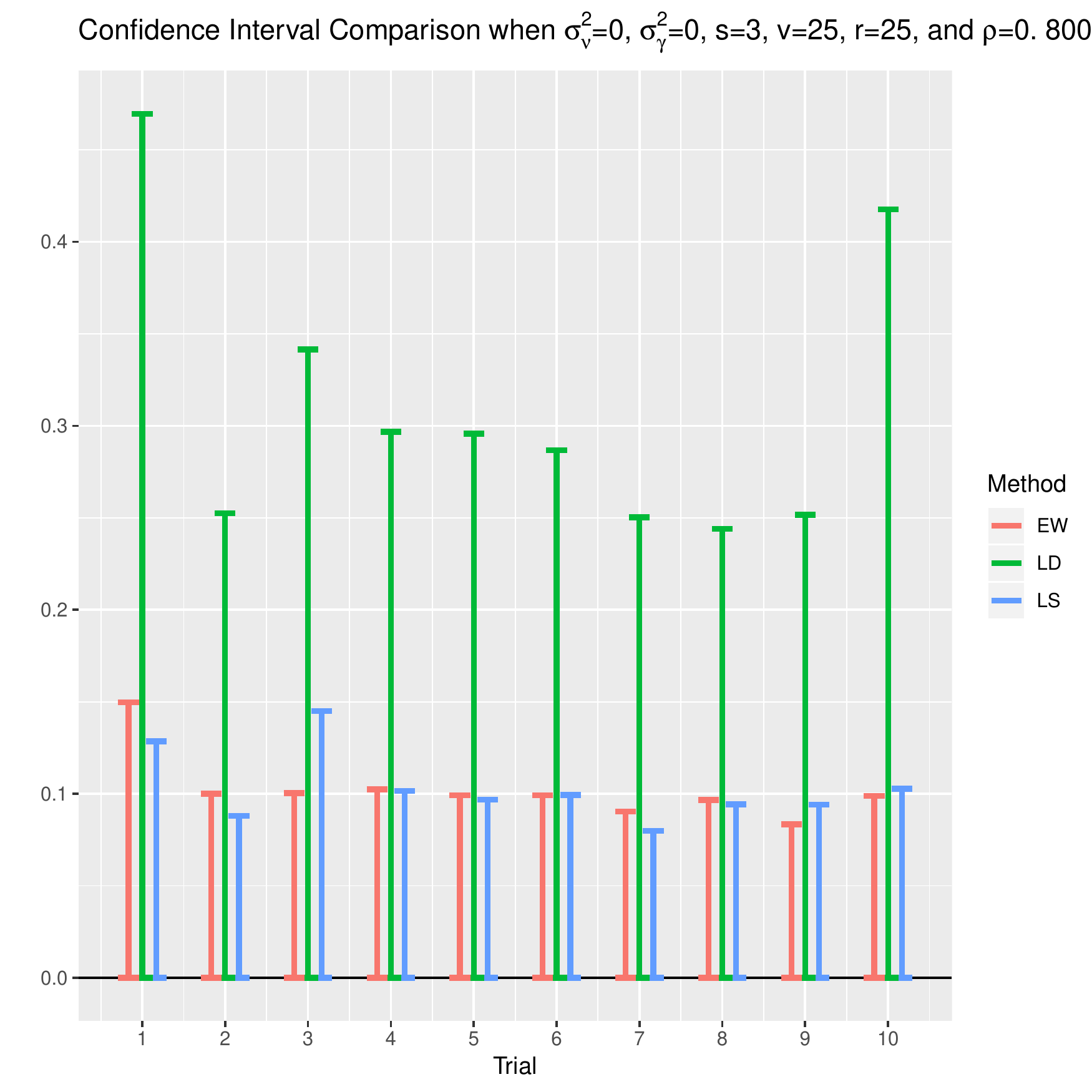}
			\caption{$\rho = 0.800$, $v=25$, $r=25$}
		\end{subfigure}
		\begin{subfigure}{.49\textwidth}
			\centering
			\includegraphics[scale=0.275]{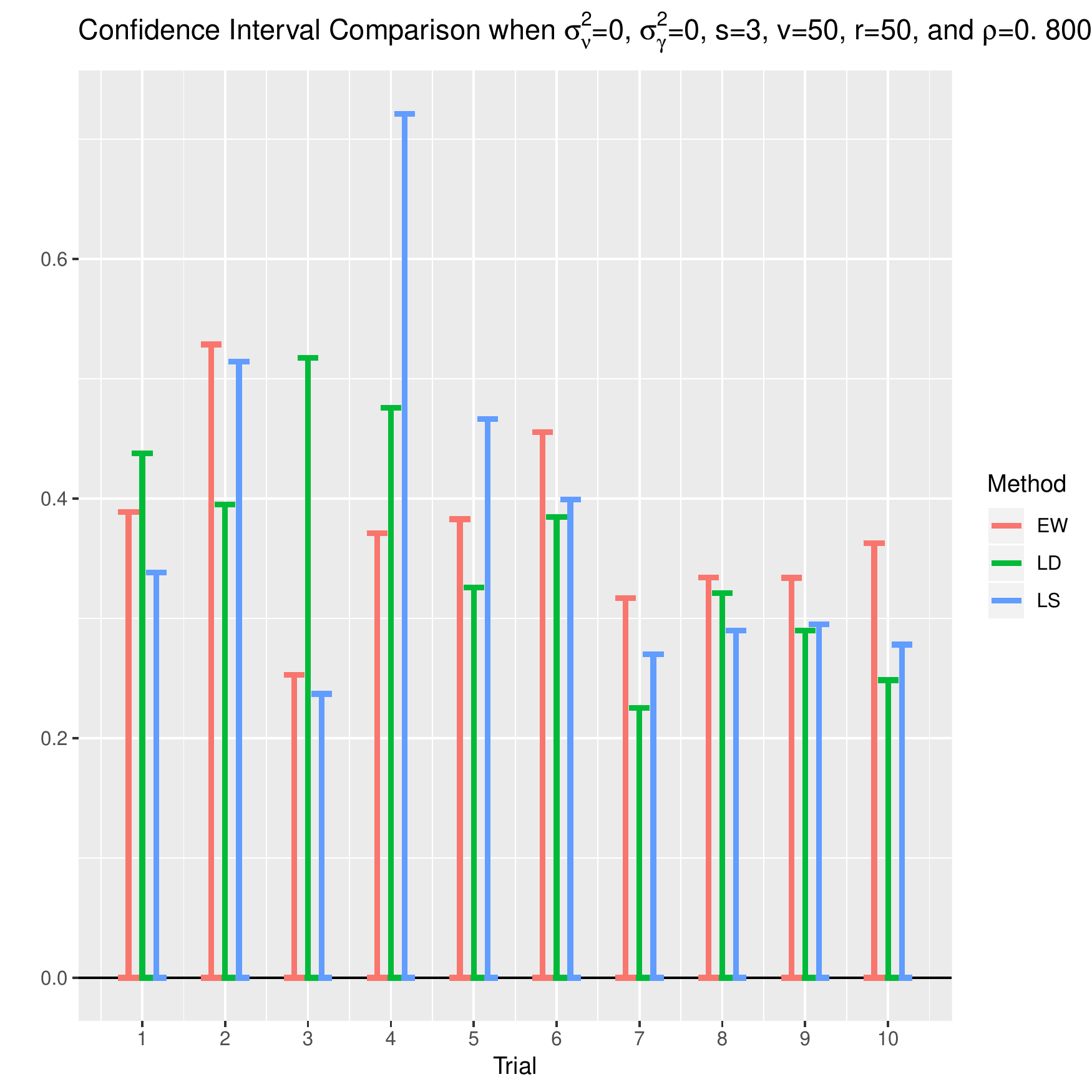}
			\caption{$\rho = 0.800$, $v=50$, $r=50$}
		\end{subfigure}
	\end{figure}	
	
	\begin{figure}
		\caption{Plots of the estimated confidence intervals in the first $10$ simulations when $\sigma_\nu^2 =1$ and $\sigma_\gamma^2 = 0$}
		\label{plotsimulationsigmanu1sigmagamma0}
		\begin{subfigure}{.49\textwidth}
			\centering
			\includegraphics[scale=0.275]{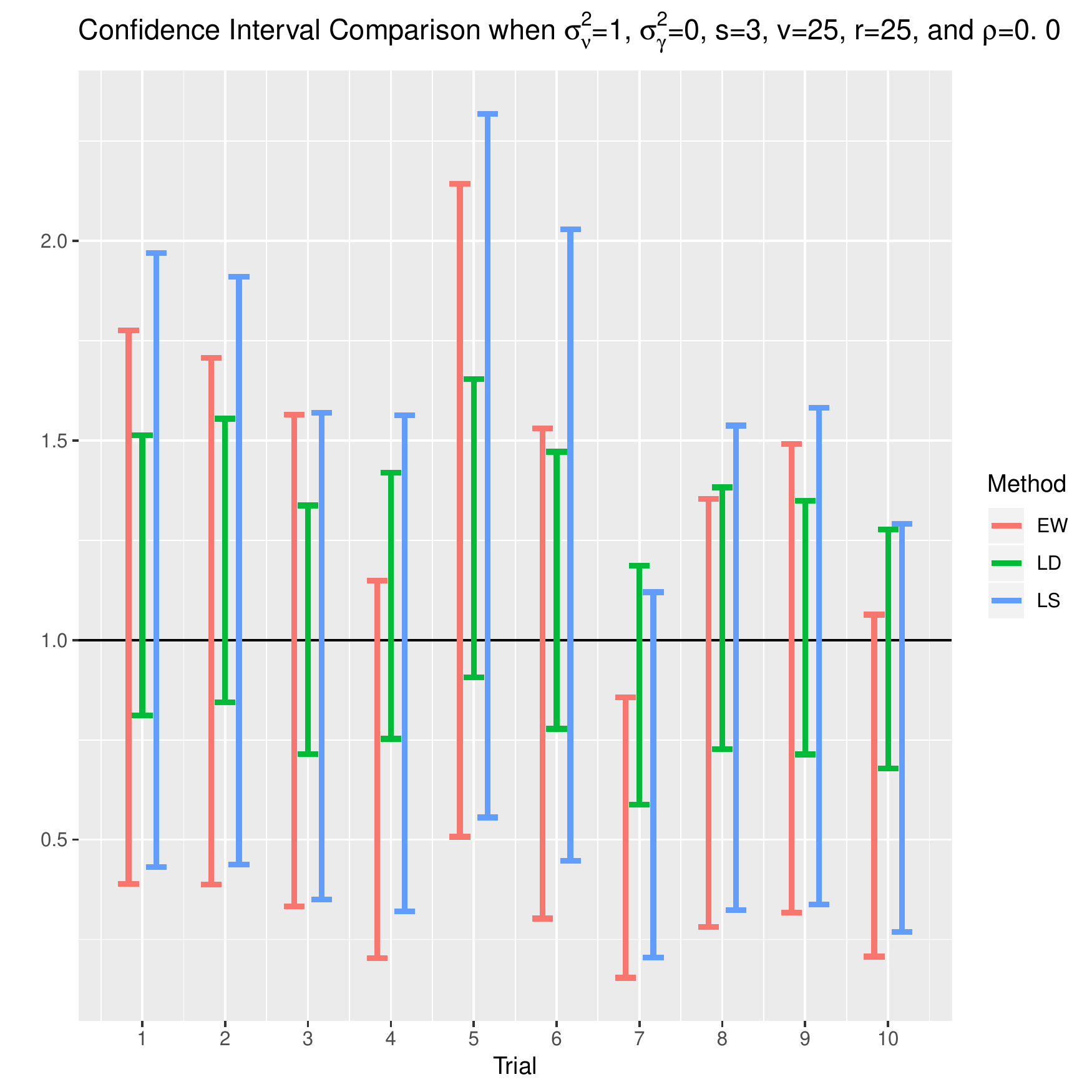}
			\caption{$\rho = 0$, $v=25$, $r=25$}
		\end{subfigure}
		\begin{subfigure}{.49\textwidth}
			\centering
			\includegraphics[scale=0.275]{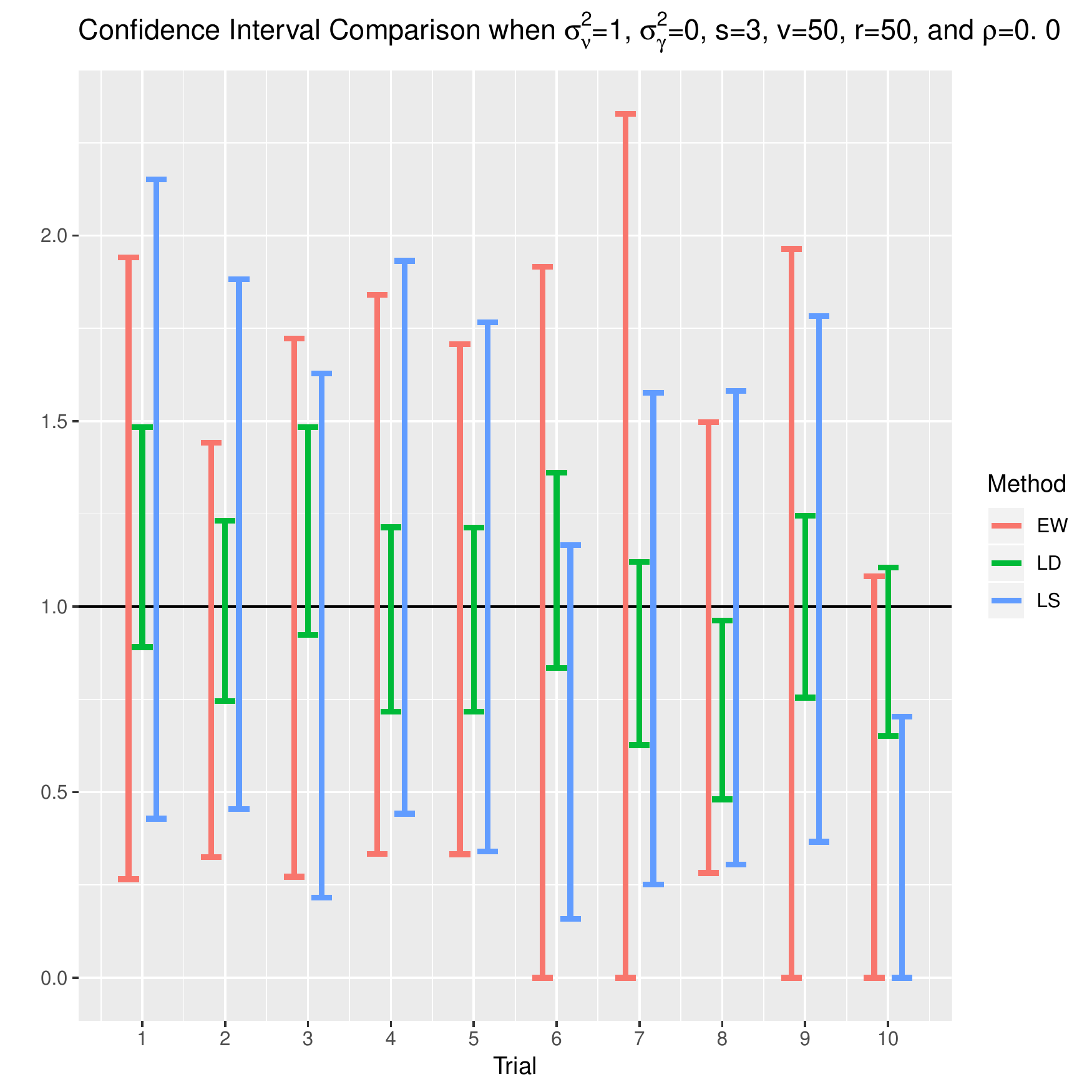}
			\caption{$\rho = 0$, $v=50$, $r=50$}
		\end{subfigure}
		\begin{subfigure}{.49\textwidth}
			\centering
			\includegraphics[scale=0.275]{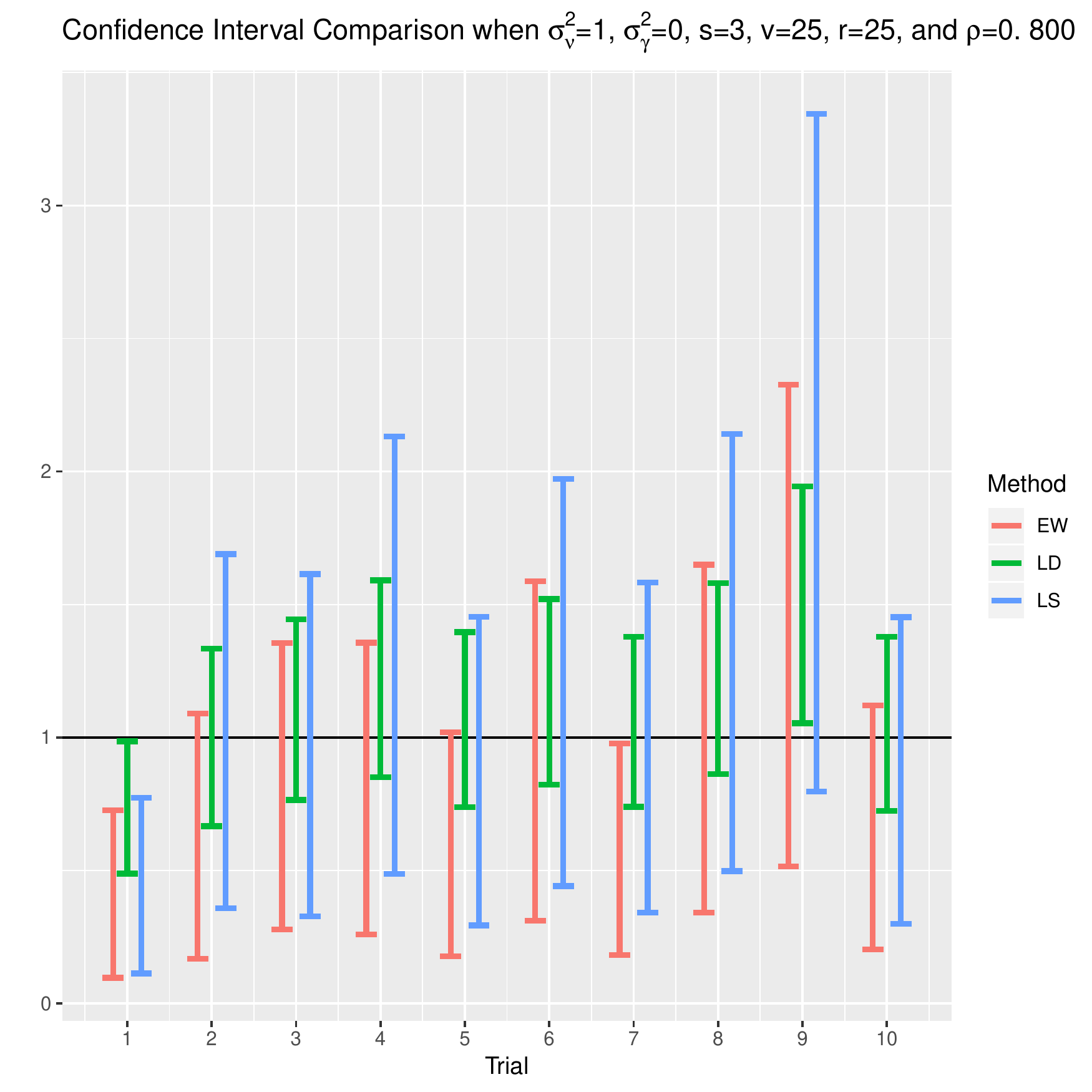}
			\caption{$\rho = 0.800$, $v=25$, $r=25$}
		\end{subfigure}
		\begin{subfigure}{.49\textwidth}
			\centering
			\includegraphics[scale=0.275]{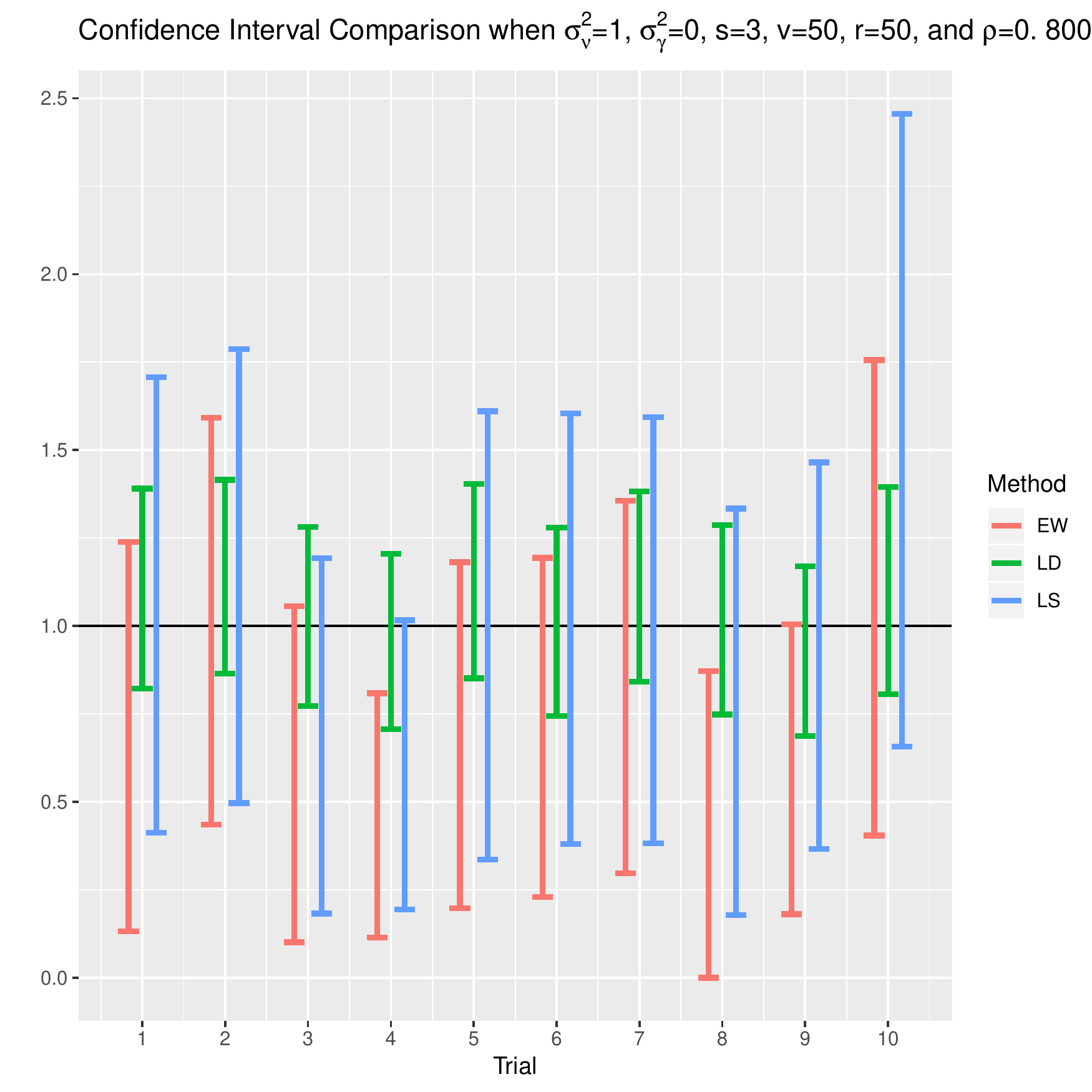}
			\caption{$\rho = 0.800$, $v=50$, $r=50$}
		\end{subfigure}
	\end{figure}
	
	\subsection{Real Data Application}\label{sectionrealdata}
	
	Following in the motivating example of Section \ref{sectionestimation}, we consider the Trends in International Mathematics and Sciences Study dataset, which is freely available online.  To simplify our analysis, we only consider the mathematics questions.  After filtering out for complete cases on background covariates, we are left with $146$ questions, $r=43$ unique countries, $p=106$ covariates, and $6808$ schools.  Therefore, we had a total of $n=6808$ responses after averaging over the students and questions within the schools.  
	
	To demonstrate our methodology, we apply all three procedures on the data.  When applying our methodology, we tune the sparsity using the exponential screening prior of \citeA{rigollet2011} and set $\alpha = 4\left\Vert Y \right\Vert^2/n$.  The high-dimensional $F$-test rejected the null hypothesis that $\sigma_\nu^2=0$ and a $95\%$ confidence interval for $\sigma_\nu^2$ is $(0.0020,0.0051)$, which suggests that, even controlling for school background characteristics, the country of the school impacts mathematical ability.  For the last part, we define a country's background characteristics $X$ to be the arithmetic average of all the schools' background characteristics within that country.  Then, applying the empirical Bayes procedure, we rank the countries based on the predicted number of questions they would answer correctly.  The top five countries in order from our analysis are South Korea, Hong Kong, Singapore, Chinese Taipei, and Japan.  Up to some reordering, our results are mostly consistent with the report of \citeA{mullis2016} based on individual student data, who had the same top five countries.

	\section{Discussions}\label{sectiondiscussion}
	In this paper, we considered three problems related to high-dimensional linear mixed models.  Throughout, we used exponential weighting to perform high-dimensional prediction due to its nice theoretical properties that do not require any assumption on the design matrix.  However, exponential weighting may be replaced with another procedure as long as we also satisfy the conditions for an oracle inequality for prediction, such as assuming compatibility for the lasso.  The theory for all three problems will remain unchanged.  In particular, the lasso has the advantage of being a convex optimization problem which is easily implemented by standard software.  
	
	The main drawback to our approach is the assumption that both the random effects and the error distributions are Gaussian, which is used to maintain independence after constructing various projections.  However, there are many situations when this is not a reasonable assumption and other procedures will need to be developed.  We view our contribution as a step towards these problems, similar to the development of the theory for linear mixed models in the low-dimensional setting.
	
	\section{Proofs}\label{sectionproofs}
	\subsection{Proofs for Section \ref{sectiontesting}}
	
	\begin{proof}[Proof of Theorem \ref{theoremnullhypothesis}]
		Under the null hypothesis, we have that
		\begin{align*}
			QY = Q\mu + Q\epsilon,
		\end{align*}
		which is a standard high-dimensional linear model.  By assumption \ref{assumptionsparsitynull} and Theorem 5 of \citeA{leung2006},
		\begin{align}\label{equationjronestar}
		\begin{aligned}
			\left\Vert A\mu - AX\betahatewQ \right\Vert^2 &\leq \left\Vert Q\mu - QX\betahatewQ \right\Vert^2 \\&\leq \left\Vert Q \right\Vert^2 \left\Vert \mu - X\betahatewQ \right\Vert^2 = \op(n_a).
		\end{aligned}
		\end{align}
		By the triangle inequality, we have that
		\begin{align}\label{equationjrtwostar}
		\begin{aligned}
			\frac{1}{\sqrt{n_a}}& \left( \left\Vert A\epsilon \right\Vert - \left\Vert A\mu - AX\betahatewQ \right\Vert \right)\\
			\leq& \frac{1}{\sqrt{n_a}} \left\Vert AY - AX\betahatewQ \right\Vert\\
			&\leq \frac{1}{\sqrt{n_a}} \left( \left\Vert A\epsilon \right\Vert + \left\Vert A\mu - AX\betahatewQ \right\Vert \right).
		\end{aligned}
		\end{align}
		By equations (\ref{equationjronestar}) and (\ref{equationjrtwostar}),
		\begin{align*}
			\frac{1}{\sqrt{n_a}} \left\Vert AY - AX\betahatewQ \right\Vert = \frac{1}{\sqrt{n_a}} \left\Vert A\epsilon \right\Vert + \op(1).
		\end{align*}
		Squaring both sides yields
		\begin{align*}
			\frac{1}{n_a} \left\Vert AY - AX\betahatewQ \right\Vert^2 = \frac{1}{n_a} \left\Vert A\epsilon \right\Vert^2 + \op(1).
		\end{align*}
		Similarly, for the denominator,
		\begin{align*}
			\frac{1}{n_b} \left\Vert BY - BX\betahatewQ \right\Vert^2 = \frac{1}{n_b} \left\Vert B\epsilon \right\Vert^2 + \op(1).
		\end{align*}
		Noting that the matrices $A$ and $B$ have orthonormal rows and are mutually orthogonal, independence of $A\epsilon$ and $B\epsilon$ follows from fact that $\epsilon$ is Gaussian.  Dividing finishes the proof.
	\end{proof}
	
	\begin{proof}[Proof of Theorem \ref{theoremalternativehypothesis}]
		Temporarily, we will start by considering the numerator for $\few$.  Let $Q\xi \defined QZ\nu + Q\epsilon \sim \n_v\left( 0_v,QZ\Psi Z^\T Q^\T + \sigma_\epsilon^2 I_v \right)$.  By an identical line of reasoning as in the proof of Theorem \ref{theoremnullhypothesis}, it follows that
		\begin{align*}
			\frac{1}{n_a} \left\Vert AY - AX\betahatewQ \right\Vert^2 = \frac{1}{n_a}\left\Vert A\xi \right\Vert^2 + \op(1/\sqrt{n}).
		\end{align*}
		We would like to remark that the $\op(1/\sqrt{n})$ as opposed to $\op(1)$ in Theorem \ref{theoremnullhypothesis} comes from assumption \ref{assumptionsparsityalternative} and we have applied Lemma \ref{lemmaoraclecorrelated} of the present paper as opposed to Theorem 5 from \citeA{leung2006}.  Similarly, for the denominator,
		\begin{align*}
			\frac{1}{n_b} \left\Vert BY - BX\betahatewQ \right\Vert^2 = \frac{1}{n_b}\left\Vert B\xi \right\Vert^2 + \op(1/\sqrt{n}) = \frac{1}{n_b}\left\Vert B\epsilon \right\Vert^2 + \op(1/\sqrt{n}).
		\end{align*}
		Thus, this yields
		\begin{align*}
			\few &= \frac{\left\Vert A\xi \right\Vert^2/{n_a}}{\left\Vert B\epsilon \right\Vert^2/{n_b}} + \op(1/\sqrt{n})\\
			&\geq \frac{\lambda_{\text{min}}(AZ\Psi Z^\T A^\T) + \sigma_\epsilon^2}{\sigma_\epsilon^2}F_{v,q} + \op(1/\sqrt{n})\\
			&\geq \frac{h/\sqrt{n} + \sigma_\epsilon^2}{\sigma_\epsilon^2}F_{v,q} + \op(1/\sqrt{n}),
		\end{align*}
		We would like to show that
		\begin{align*}
			\e_{H_0} \phidelta + \e_{H_1}\phidelta = 
			\p_{H_0} \left( \few > F_{n_a,n_b,\delta} \right) + \p_{H_1} \left( \few \leq F_{n_a,n_b,\delta} \right) < 1.
		\end{align*}
		It suffices to show that
		\begin{align*}
			\p_{H_1} \left( \few > F_{n_a,n_b,\delta} \right) > \delta.
		\end{align*}
		We will start by providing an upper bound on $F_{n_a,n_b,\delta}$.  From Lemma 1 of \citeA{laurent2000}, it follows that, for a $\chi^2_d$ random variable,
		\begin{align*}
			&\p\left(\chi^2_d > d + 2\sqrt{dx} + 2x\right) \leq \exp(-x),\\
			&\p\left(\chi^2_d \leq d - 2\sqrt{dy} \right) \leq \exp(-y).
		\end{align*}
		Therefore, it follows that, for any $x,y>0$,
		\begin{align*}
			\p\left( F_{n_a,n_b} > \frac{1 + 2\sqrt{x/{n_a}} + 2x/{n_a}}{1 - 2\sqrt{y/{n_b}}} \right) \leq \exp(-x) + \exp(-y).
		\end{align*}
		By choosing $x,y>0$ such that
		\begin{align*}
			\exp(-x) + \exp(-y) \leq \delta,
		\end{align*}
		then
		\begin{align*}
			F_{n_a,n_b,\delta} \leq \frac{1 + 2\sqrt{x/{n_a}} + 2x/{n_a}}{1 - 2\sqrt{y/{n_b}}}.
		\end{align*}
		Let $a,b>0$ be constants that will be chosen later.  Define the event $\Tset$ as
		\begin{align*}
			\Tset \defined \left\{ \chi^2_{n_b} \leq n_b + 2\sqrt{n_bb} + 2b , \chi^2_{n_a} > n_a - 2\sqrt{n_aa} \right\}.
		\end{align*}
		Again, by Lemma 1 of \citeA{laurent2000},
		\begin{align*}
			\p\left( \Tset^\C \right) \leq \exp\left(-a\right) + \exp\left(-b\right).
		\end{align*}
		Now,
		\begin{align*}
			\p_{H_1} \left( \few > F_{n_a,n_b,\delta} \right) &\geq \p_{H_1} \left( \few > F_{n_a,n_b,\delta} , \Tset \right) \\
			&\geq \p_{H_1} \Bigg( \frac{h/\sqrt{n} + \sigma_\epsilon^2}{\sigma_\epsilon^2}\left( \frac{1 - 2\sqrt{a/{n_a}}}{1 + 2\sqrt{b/{n_b}} + 2b/{n_b}} \right)  \\
			&\hphantom{agasdja}+ \op(1/\sqrt{n}) > \frac{1 + 2\sqrt{x/{n_a}} + 2x/{n_a}}{1 - 2\sqrt{y/{n_b}}} , \Tset \Bigg).
		\end{align*}
		From assumption \ref{assumptionsparsityalternative}, we have that $n_a \asymp n_b \asymp n$.  Let
		\begin{align*}
			\liminf_{n\to\infty} \frac{n_a}{n} = c_a, &&& \liminf_{n\to\infty} \frac{n_b}{n} = c_b.
		\end{align*}
		Then, assuming that
		\begin{align*}
			h > 2\sigma_\epsilon^2 \left( \sqrt{bc_b} + \sqrt{ac_a} + \sqrt{yc_b} + \sqrt{xc_a} \right),
		\end{align*}
		it will follow that
		\begin{align*}
			\liminf_{n\to\infty} \p_{H_1} \left( \few > F_{n_a,n_b,\delta} \right) \geq \liminf_{n\to\infty} \p(\Tset) \geq 1 - \exp(-a) - \exp(-b),
		\end{align*}
		which finishes the proof by tuning $a,b>0$ and $h$ sufficiently large appropriately.
	\end{proof}
	
	\begin{proof}[Proof of Lemma \ref{lemmaoraclecorrelated}]
		Let $P_{m}$ denote the projection onto $X_m$ and $\pp{m}$ the projection onto the orthogonal complement.  Let $r_m$ as
		\begin{align*}
			r_m \defined \left\Vert \pp{m} \mu \right\Vert^2.
		\end{align*}
		We will choose an arbitrary sequence of weakly sparse sets $S\in\mathcal{S}_\mu$ satisfying
		\begin{align*}
			r_S = o(n).
		\end{align*}
		Fix $t>0$ arbitrarily and define the set $\A_t$ as follows
		\begin{align*}
			\A_t \defined \left\{ m\in\m_u : r_m \leq tn \right\}.
		\end{align*}
		Looking at the squared norm, we will start with the convexity of the norm and the triangle inequality to obtain that
		\begin{align*}
			\frac{1}{n} \left\Vert \sum_{m\in\m_u} w_m\left( P_{m}Y - \mu \right) \right\Vert^2
			\leq& \frac{1}{n} \sum_{m\in\m_u} w_m \left\Vert P_{m} Y - \mu \right\Vert^2\\
			\leq& \frac{2}{n} \sum_{m\in\m_u} w_m \left( r_m + \left\Vert P_{m} \xi \right\Vert^2 \right)\\
			\leq& \frac{2}{n} \left(\sum_{m\in\A_t} w_m r_m + \sum_{m\in\A_t^\C} w_m r_m \right) \\
			&+ \frac{2}{n} \sum_{m\in\m_u} w_m \left\Vert P_{m} \xi \right\Vert^2.
		\end{align*}
		We will consider each of the three terms separately.  For the first term, it follows from the definition of $\A_t$ that
		\begin{align*}
			\limsup_{n\to\infty} \frac{2}{n} \e\left( \sum_{m\in\A_t} w_m r_m \right) \leq 2t.
		\end{align*}
		For the second term, we have that
		\begin{align*}
			\e \left( \sum_{m\in\A_t^\C} w_m r_m \right)
			= \sum_{m\in\A_t^\C} r_m \e w_m.
		\end{align*}
		Fix $m\in\A_t^\C$ temporarily.  Then, we have that
		\begin{align*}
			w_m &\leq \exp\left( -\frac{1}{\alpha} \left( \left\Vert \pp{m} Y \right\Vert^2 - \left\Vert \pp{S} Y \right\Vert^2 \right) \right)\\
			&\leq \exp\left( -\frac{1}{\alpha} \left( r_m - r_S + 2\xi^\T\pp{m}\mu - 2\xi^\T\pp{S}\mu + \left\Vert P_{S}\xi \right\Vert^2 - \left\Vert P_{m}\xi \right\Vert^2 \right) \right)\\
			&\leq \exp\left( -\frac{1}{\alpha} \left( r_m - r_S + 2\xi^\T\pp{m}\mu - 2\xi^\T\pp{S}\mu - \left\Vert P_{m}\xi \right\Vert^2 \right) \right).
		\end{align*}
		Therefore, by the generalized H\"{o}lder inequality, 
		\begin{align*}
			\e w_m \leq &\exp\left(-\frac{1}{\alpha} \left( r_m - r_S \right) \right) \left(\e \exp\left( -\frac{6}{\alpha} \xi^\T \pp{m} \mu \right)\right)^{1/3} \\
			&\times \left(\e \exp\left( \frac{6}{\alpha} \xi^\T \pp{S} \mu \right)\right)^{1/3} \left(\e \exp\left( \frac{3}{\alpha} \left\Vert P_{m} \xi \right\Vert^2 \right) \right)^{1/3}.
		\end{align*}
		Computing each of the three Laplace transforms separately, since
		\begin{align*}
			\xi^\T \pp{m} \mu \sim \n\left(0 , \left\Vert \Sigma^{1/2} \pp{m} \mu \right\Vert^2 \right),
		\end{align*}
		it follows that
		\begin{align*}
			\left(\e \exp\left( -\frac{6}{\alpha} \xi^\T \pp{m} \mu \right) \right)^{1/3} = \exp \left( \frac{6}{\alpha^2} \left\Vert \Sigma^{1/2} \pp{m} \mu \right\Vert^2 \right) 
			\leq \exp \left( \frac{6\lambda}{\alpha^2} r_m \right).
		\end{align*}
		Similarly,
		\begin{align*}
			\left(\e \exp\left( \frac{6}{\alpha} \xi^\T \pp{S} \mu \right) \right)^{1/3} \leq \exp \left( \frac{6\lambda}{\alpha^2} r_S \right).
		\end{align*}
		For the other term, observe that
		\begin{align*}
			\left\Vert P_{m} \xi \right\Vert^2 \preceq \lambda\chi^2_u,
		\end{align*}
		where $\preceq$ denotes stochastic ordering.  Hence,
		\begin{align*}
			\e \exp\left( \frac{3}{\alpha} \left\Vert P_{m} \xi \right\Vert^2 \right)
			\leq \e \exp\left( \frac{3\lambda}{\alpha} \chi^2_u \right) = \left( 1 - \frac{6\lambda}{\alpha} \right)^{-u/2}.
		\end{align*}
		Since $\alpha > 6\lambda$ and $m\in\A_t^\C$, it will follow that
		\begin{align*}
			\e w_m &\leq 
			\exp\left(-\frac{1}{\alpha} \left(1 - \frac{6\lambda}{\alpha}\right) r_m + \frac{6\lambda}{\alpha^2}r_S - \frac{u}{2}\log\left( 1 - \frac{6\lambda}{\alpha} \right) \right)\\
			&\leq \exp\left(- \left(1 - \frac{6\lambda}{\alpha}\right) \frac{tn}{\alpha} + o(n) - \frac{u}{2}\log\left( 1 - \frac{6\lambda}{\alpha} \right) \right).
		\end{align*}
		By assumption, 
		\begin{align*}
			\limsup_{n\to\infty} \frac{1}{n} \left\Vert \pp{m}\mu \right\Vert^2 \leq \limsup_{n\to\infty} \frac{1}{n} \left\Vert \mu \right\Vert^2 \leq C,
		\end{align*}
		for some constant $C > 0$.  Hence,
		\begin{align*}
			\limsup_{n\to\infty} &\frac{2}{n} \e \left( \sum_{m\in\A_t^\C} w_m r_m \right) \leq 2C \limsup_{n\to\infty} \sum_{m\in\A_t^\C} \e w_m\\
			&\leq 2C\limsup_{n\to\infty} \exp\left(- \left(1 - \frac{6\lambda}{\alpha}\right) \frac{tn}{\alpha} + o(n) - \frac{u}{2}\log\left( 1 - \frac{2\lambda}{\alpha} \right) + \log\left(|\A_t^\C|\right) \right) \\
			&\to 0,
		\end{align*}
		since $\log\left(|\m_u|\right) = o(n)$.  This implies that
		\begin{align*}
			\frac{2}{n} \e \left( \sum_{m\in\A_t^\C} w_m r_m \right) \to 0.
		\end{align*}
		For the last term, fix a value of $K > 0$ large to be determined later.  Define the event $\Tset$ as
		\begin{align*}
			\Tset \defined \bigcap_{m\in\m_u} \left\{ \left\Vert P_{m}\xi \right\Vert^2 \leq K\log\left(|\m_u|\right) + \lambda s \right\}.
		\end{align*}
		We will first provide an upper bound on $\p(\Tset^\C)$.  Indeed,
		\begin{align*}
			\p\left( \Tset^\C \right) \leq \sum_{m\in\m_u} \p\left( \left\Vert P_{m} \xi \right\Vert^2 > K\log\left(|\m_u|\right) + \lambda s \right).
		\end{align*}
		Let $\epsilon \sim \n_n(0_n,I_n)$ and define $Q_m \defined \Sigma^{1/2} P_{m} \Sigma^{1/2}$.  Fixing $m\in\m_u$, we have by the Hanson-Wright inequality (Theorem 1.1 of \citeA{rudelson2013}) that
		\begin{align*}
			\p &\left( \left| \left\Vert P_{m} \xi \right\Vert^2 - \e \left\Vert P_{m} \xi \right\Vert^2 \right| > K\log\left(|\m_u|\right) \right) \\
			&=\p\left( \left| \epsilon^\T Q_m \epsilon - \e \epsilon^\T Q_m \epsilon \right| > K\log\left(|\m_u|\right) \right) \\
			&\leq 2 \exp\left( -c \min\left( \frac{K^2\log^2\left(|\m_u|\right)}{\left\Vert Q_m \right\Vert^2_\text{HS}} , \frac{K\log\left(|\m_u|\right)}{\left\Vert Q_m \right\Vert_\text{op}} \right) \right).
		\end{align*}
		for some universal constant $c>0$.  Note that
		\begin{align*}
			\left\Vert Q_m \right\Vert_\text{op} \leq \left\Vert \Sigma^{1/2} \right\Vert^2_\text{op} \left\Vert P_{m} \right\Vert_\text{op} = \lambda.
		\end{align*}
		Since $Q_m$ is a real symmetric matrix, it admits a Spectral Decomposition with maximal eigenvalue bounded by $\lambda$ and rank at most $s$.  Hence, this gives
		\begin{align*}
			\left\Vert Q_m \right\Vert_\text{HS}^2 \leq \lambda^2 s.
		\end{align*}
		Directly computing the expectation,
		\begin{align*}
			\e \epsilon^\T Q_m \epsilon = \Tr(Q_m) \leq \lambda s.
		\end{align*}
		This gives
		\begin{align*}
			\p& \left( \left| \left\Vert P_{m} \xi \right\Vert^2 - \e \left\Vert P_{m} \xi \right\Vert^2 \right| > K\log\left(|\m_u|\right) \right) \\&\leq 2 \exp \left( -c\min\left( \frac{K^2\log^2\left(|\m_u|\right)}{\lambda^2 s} , \frac{K\log\left(|\m_u|\right)}{\lambda} \right) \right).
		\end{align*}
		This implies that
		\begin{align}
			\begin{aligned}\label{equationtsetprob}
				\sum_{m\in\m_u}&\p\left(  \left\Vert P_{m} \xi \right\Vert^2 > K\log\left(|\m_u|\right) + \lambda s \right) \\&\leq 2\exp\left( -c\min\left( \frac{K^2\log^2\left(|\m_u|\right)}{\lambda^2 s} , \frac{K\log\left(|\m_u|\right)}{\lambda} \right) + \log(|\m_u|) \right).
			\end{aligned}
		\end{align}
		Now, 
		\begin{align*}
			\e \left( \sum_{m\in\m_u} w_m \left\Vert P_m\xi \right\Vert^2 \right)
			= \e \left( \sum_{m\in\m_u} w_m \left\Vert P_m\xi \right\Vert^2 : \Tset \right) + \e \left( \sum_{m\in\m_u} w_m \left\Vert P_m\xi \right\Vert^2 : \Tset^\C \right)
		\end{align*}
		Computing each term separately, we have, by the definition of $\Tset$, that
		\begin{align*}
			\e \left( \sum_{m\in\m_u} w_m \left\Vert P_m\xi \right\Vert^2 : \Tset \right) \leq K\log\left(|\m_u|\right) + \lambda s = o(n).
		\end{align*}
		For the other term, note that
		\begin{align*}
			\e &\left( \sum_{m\in\m_u} w_m \left\Vert P_m\xi \right\Vert^2 : \Tset^\C \right) 
			\leq \e \left( \left\Vert \xi \right\Vert^2 : \Tset^\C \right)\\
			&\leq \left( \left(\e \left\Vert \xi \right\Vert^4 \right) \p\left(\Tset^\C\right) \right)^{1/2}
			\leq \left( \lambda^4 \left(n^2 + 2n \right)\p\left(\Tset^\C\right)\right)^{1/2} \to 0
		\end{align*}
		where the second inequality is due to Cauchy-Schwarz and the limit follows from equation (\ref{equationtsetprob}), assuming that $K>0$ was chosen sufficiently large.  Therefore, we have shown that
		\begin{align*}
			\limsup_{n\to\infty} \frac{1}{n} \e \left\Vert \sum_{m\in\m_u} w_m\left( P_{m}Y - \mu \right) \right\Vert^2 \leq 2t.
		\end{align*}
		Since $t>0$ was arbitrary, this proves the first claim.  For the second claim, note that the proof is identical except for defining the set $\A_t$ as
		\begin{align*}
			\A_t \defined \left\{ m\in\m_u : r_m \leq t\sqrt{n} \right\}.
		\end{align*}
		This finishes the proof.		
	\end{proof}
	
	\subsection{Proofs for Section \ref{sectionci}}
	\begin{proof}[Proof of Theorem \ref{theoremci}]
		Let $\xi \defined D^{-1/2}V^\T AZ\nu + D^{-1/2}V^\T A\epsilon$.  From the proof of Theorem \ref{theoremalternativehypothesis}, it is easy to see that
		\begin{align*}
			&\frac{1}{n_a}\left\Vert D^{-1/2}V^\T AY - D^{-1/2}V^\T AX\betahatewQ \right\Vert^2 = \frac{1}{n_a} \left\Vert \xi \right\Vert^2 + \op(1/\sqrt{n_a}),\\
			&\frac{1}{n_b}\left\Vert BY - BX\betahatewQ \right\Vert^2 = \frac{1}{n_b} \left\Vert B\epsilon \right\Vert^2 + \op(1/\sqrt{n_b}).
		\end{align*}
		Therefore, we immediately have that
		\begin{align*}
			\sqrt{n} \left( \sigmanuhatQ - \dbar\sigma_\nu^2 \right)
			&= \sqrt{n} \left( \frac{1}{n_a} \left\Vert \xi \right\Vert^2 - \frac{1}{n_b} \left\Vert B\epsilon \right\Vert^2 - \dbar\sigma_\nu^2 \right) + \op(1).
		\end{align*}
		We will consider each random variable separately.  Indeed, by the Central Limit Theorem,
		\begin{align*}
			\sqrt{n_b} \left( \frac{1}{n_b} \left\Vert B\epsilon \right\Vert^2 - \sigma_\epsilon^2 \right) \cond \n\left(0,2\sigma_\epsilon^4 \right),
		\end{align*}
		or, equivalently,
		\begin{align*}
			\frac{\sqrt{n_b}}{\sqrt{2\sigma_\epsilon^4}\sqrt{n}} \sqrt{n} \left( \frac{1}{n_b} \left\Vert B\epsilon \right\Vert^2 - \sigma_\epsilon^2 \right) \cond \n(0,1).
		\end{align*}
		For the other term, we will verify Lindeberg's condition.  Define
		\begin{align*}
			&\mu_k \defined \e \xi_k^2 = \sigma_\nu^2 + d_k^{-1} \sigma_\epsilon^2,\\
			&\sigma_k^2 \defined \var(\xi^2_k) = 2\left( \sigma_\nu^2 + d_k^{-1} \sigma_\epsilon^2 \right)^2 = 2\mu_k^2,\\
			&s_{n_a}^2 \defined \sum_{k=1}^{n_a} \sigma_k^2.
		\end{align*}
		Let $\left(\zeta_k\right)_{k=1}^{n_a}$ be a sequence of i.i.d. $\chi^2_1$ random variables.  Then, fixing $\delta >0$, 
		\begin{align*}
			\limsup_{n_a\to \infty} &\frac{1}{s_{n_a}^2} \sum_{k=1}^{n_a} \e \left( \left(\xi_k^2 - \mu_k\right)^2 : \left| \xi_k - \mu_k \right| > \delta s_{n_a} \right)\\
			&\leq \limsup_{n_a\to \infty} \frac{1}{s_{n_a}^2} \sum_{k=1}^{n_a} \mu_k^2 \e \left( \left(\zeta_k^2 - 1\right)^2  : \left| \zeta_k - 1 \right| > \frac{\delta s_{n_a}}{\mu_k} \right)\\
			&\leq \limsup_{n_a\to \infty} \frac{1}{s_{n_a}^2} \sum_{k=1}^{n_a} \mu_k^2 \left(\e \left( \zeta_k^2 - 1 \right)^4\right)^{1/2}  \p\left(\left| \zeta_k - 1 \right| > \frac{\delta s_{n_a}}{\mu_k} \right)^{1/2}\\
			&\leq \limsup_{n_a\to \infty} C \bigvee_{k=1}^{n_a} \p\left(\left| \zeta_k - 1 \right| > \frac{\delta s_{n_a}}{\mu_k} \right)^{1/2}
			\to 0,
		\end{align*}
		where $C$ is the square root of the fourth central moment of a $\chi^2_1$ random variable.  The second inequality is an application of Cauchy-Schwarz and the limit follows from assumption \ref{assumptioneffectivesamplesize}.  Therefore, by the Lindeberg Central Limit Theorem,
		\begin{align*}
			\frac{1}{s_{n_a}} \left( \left\Vert \xi \right\Vert^2 - \left(n_a\sigma_\nu^2 + \Tr(D^{-1})\sigma_\epsilon^2\right) \right) \cond \n\left(0,1\right).
		\end{align*}
		Equivalently,
		\begin{align*}
			\frac{\Tr(D^{-1})}{s_{n_a}\sqrt{n}} \sqrt{n} \left( \frac{1}{\Tr(D^{-1})}\left\Vert \xi \right\Vert^2 - \left(\frac{n_a}{\Tr(D^{-1})}\sigma_\nu^2 + \sigma_\epsilon^2\right) \right) \cond \n\left(0,1\right).
		\end{align*}
		By the independence of $\xi$ and $B\epsilon$ and assumption \ref{assumptionasymptoticsamplesize}, the desired result now follows by differencing.
	\end{proof}

	\begin{proof}[Proof of Proposition \ref{propositionscalingestimation}]
		Indeed, by the Law of Large Numbers,
		\begin{align*}
			\sigmaepsilonhatQ \conp \sigma_\epsilon^2.
		\end{align*}
		Hence, it is clear that $\sigmabhat \conp \sigma_b^2$.  The fact that $\hat{d} \conp \dbar$ is assumption \ref{assumptionasymptoticsamplesize}.  Finally, for $\sigmaahat$, it follows from Theorem \ref{theoremci} that
		\begin{align*}
			\sigmanuhatQ \conp \dbar\sigma_\nu^2.
		\end{align*}
		Now, define $\hat{s}_{n_a}$ as
		\begin{align*}
			\hat{s}^2_{n_a} \defined 2\sum_{k=1}^{n_a} \left( \sigmanuhat/\hat{d} + d_k^{-1}\sigmaepsilonhat \right)^2.
		\end{align*}
		It suffices to show that
		\begin{align*}
			\frac{\hat{s}^2_{n_a}}{s^2_{n_a}} \conp 1,
		\end{align*}
		where $s^2_{n_a}$ is defined in the proof of Theorem \ref{theoremci}.  Then, we have that
		\begin{align*}
			\frac{\hat{s}^2_{n_a}}{s^2_{n_a}}
			&= \frac{\sum_{k=1}^{n_a} \left( \sigmanuhat/\hat{d} + d_k^{-1}\sigmaepsilonhat \right)^2}{\sum_{k=1}^{n_a} \left( \sigma_\nu^2 + d_k^{-1} \sigma_\epsilon^2 \right)^2}\\
			&= \frac{\sum_{k=1}^{n_a} \left( \left(\dbar\sigma_\nu^2 + \op(1)\right)/\left(\dbar + o(1)\right) + d_k^{-1}\left(\sigma_\epsilon^2 + \op(1) \right) \right)^2}{\sum_{k=1}^{n_a} \left( \sigma_\nu^2 + d_k^{-1} \sigma_\epsilon^2 \right)^2}\\
			&= \frac{\sum_{k=1}^{n_a} \left( \left( \sigma_\nu^2 + d_k^{-1}\sigma_\epsilon^2 \right) + d_k^{-1}\op(1) + \op(1) \right)^2}{\sum_{k=1}^{n_a} \left( \sigma_\nu^2 + d_k^{-1} \sigma_\epsilon^2 \right)^2}\\
			&= 1 + \frac{2\sum_{k=1}^{n_a} \left( \sigma_\nu^2 + d_k^{-1}\sigma_\epsilon^2 \right) \left(d_k^{-1}\op(1) + \op(1) \right) }{\sum_{k=1}^{n_a} \left( \sigma_\nu^2 + d_k^{-1} \sigma_\epsilon^2 \right)^2} + \frac{\sum_{k=1}^{n_a} \left(d_k^{-1}\op(1) + \op(1) \right)^2}{\sum_{k=1}^{n_a} \left( \sigma_\nu^2 + d_k^{-1} \sigma_\epsilon^2 \right)^2}.
		\end{align*}
		We will consider each of the two terms separately. Applying Cauchy-Schwarz to the first term, it follows that
		\begin{align*}
			\left|\frac{2\sum_{k=1}^{n_a} \left( \sigma_\nu^2 + d_k^{-1}\sigma_\epsilon^2 \right) \left(d_k^{-1} + \op(1) \right) \op(1)}{\sum_{k=1}^{n_a} \left( \sigma_\nu^2 + d_k^{-1} \sigma_\epsilon^2 \right)^2}\right|
			\leq \frac{2\sqrt{\sum_{k=1}^{n_a}\left(d_k^{-1}\op(1) + \op(1) \right)^2}}{\sqrt{\sum_{k=1}^{n_a} \left( \sigma_\nu^2 + d_k^{-1} \sigma_\epsilon^2 \right)^2}} \conp 0.
		\end{align*}
		For the second term, note that
		\begin{align*}
			\frac{\sum_{k=1}^{n_a} \left( d_k^{-1}\op(1) + \op(1) \right)^2}{\sum_{k=1}^{n_a} \left( \sigma_\nu^2 + d_k^{-1} \sigma_\epsilon^2 \right)^2} \leq \op(1) \frac{1}{\sigma_\epsilon^4} \conp 0,
		\end{align*}
		which finishes the proof.
	\end{proof}
	
	\subsection{Proofs for Section \ref{sectionestimation}}
	
	\begin{proof}[Proof of Proposition \ref{propositionvarianceconsistency}]
		Indeed, the consistency of $\sigmaepsilonhat$ is by the Law of Large Numbers since
		\begin{align*}
			\sigmaepsilonhat = \frac{1}{n_b} \left\Vert B\epsilon \right\Vert^2 + \op(1).
		\end{align*}
		Let $\left(\zeta_k\right)_{k=1}^{n_a}$ be a sequence of i.i.d. $\chi^2_1$ random variables.  For $\sigmanuhat$, we have that
		\begin{align*}
			\frac{1}{\Tr(D^{-1})}&\left\Vert D^{-1/2}V^\T AY - D^{-1/2} V^\T AX\betahatew \right\Vert^2\\
			&=\frac{1}{\Tr(D^{-1})}\left\Vert D^{-1/2}V^\T AZ\nu + D^{-1/2}V^\T A\epsilon \right\Vert^2 + \op(1)\\
			&\equid \frac{1}{\Tr(D^{-1})} \sum_{k=1}^{n_a} \left( \sigma_\nu^2 + d_k^{-1}\sigma_\epsilon^2 \right) \zeta_k + \op(1)\\
			&= \frac{n_a}{\Tr(D^{-1})} \frac{\sigma_\nu^2}{n_a} \sum_{k=1}^{n_a} \zeta_k + \frac{\sigma_\epsilon^2}{\Tr(D^{-1})} \sum_{k=1}^{n_a} d_k^{-1}\zeta_k + \op(1).
		\end{align*}
		Considering each term separately, we have, by the Law of Large Numbers and assumption \ref{assumptionvarianceconsistency},
		\begin{align*}
			 \frac{n_a}{\Tr(D^{-1})} \frac{\sigma_\nu^2}{n_a} \sum_{k=1}^{n_a} \zeta_k \conp \dbar \sigma_\nu^2.
		\end{align*}
		For the second term, by a direct variance calculation,
		\begin{align*}
			\var \left( \frac{\sigma_\epsilon^2}{\Tr(D^{-1})} \sum_{k=1}^{n_a} d_k^{-1}\zeta_k \right) = \frac{\sigma_\epsilon^4}{\Tr\left( D^{-1} \right)^{2}} \sum_{k=1}^{n_a} d_k^{-2} \var(\zeta_k) = \frac{2\sigma_\epsilon^4 \Tr(D^{-2})}{\Tr(D^{-1})^2} \to 0,
		\end{align*}
		where the limit uses assumption \ref{assumptionvarianceconsistency}.  Hence,
		\begin{align*}
			\frac{\sigma_\epsilon^2}{\Tr(D^{-1})} \sum_{k=1}^{n_a} d_k^{-1}\zeta_k \conp \sigma_\epsilon^2.
		\end{align*}
		Combining everything together shows that
		\begin{align*}
			\sigmanuhat \conp \dbar \sigma_\nu^2.
		\end{align*}
		The proof for $\sigmagammahat$ is analogous and is omitted, which finishes the proof.
	\end{proof}
	
	\begin{proof}[Proof of Lemma \ref{lemmaeb}]
		By independence, the joint distribution of $(\nu,\gamma,\epsilon)$ is given by
		\begin{align*}
			\begin{pmatrix}
				\nu \\ \gamma \\ \epsilon
			\end{pmatrix} \sim 
			\n_{v+r+n} \left( 
			\begin{pmatrix}
				0_v \\ 0_r \\ 0_n
			\end{pmatrix} , 
			\begin{pmatrix}
				\sigma_\nu^2 I_v & 0_{v\times r} & 0_{v\times n}\\
				0_{r\times v} & \sigma_\gamma^2 I_r & 0_{r\times n}\\
				0_{n\times v} & 0_{n\times r} & \sigma_\epsilon^2 I_n
			\end{pmatrix}
			\right).
		\end{align*}
		Therefore, the joint distribution of $Y$ and $\eta$ is given by
		\begin{align*}
			\begin{pmatrix}
				Y \\ \eta
			\end{pmatrix} \sim
			\n_{2n} \left( 
			\begin{pmatrix}
				\mu \\ \mu
			\end{pmatrix},
			\begin{pmatrix}
				\sigma_\nu^2 ZZ^\T + \sigma_\gamma^2 WW^\T + \sigma_\epsilon^2 I_n & \sigma_\nu^2 ZZ^\T\\
				\sigma_\nu^2 ZZ^\T & \sigma_\nu^2 ZZ^\T
			\end{pmatrix}
			\right).
		\end{align*}
		By standard results on the conditional mean of $\eta|Y$, it follows that
		\begin{align*}
			\e \left( \eta | Y \right) = \mu + \sigma_\nu^2 ZZ^\T\left( \sigma_\nu^2 ZZ^\T + \sigma_\gamma^2 WW^\T + \sigma_\epsilon^2 I_n\right)^{-1} \left(Y - \mu\right).
		\end{align*}
		which finishes the proof.
	\end{proof}
	
	\begin{proof}[Proof of Theorem \ref{theoremeboracle}]
		From Proposition \ref{propositionvarianceconsistency}, it follows that
		\begin{align*}
			\frac{\sigmanuhat}{\hat{d}} = \sigma_\nu^2 + \delta_\nu^2 &&&\frac{\sigmagammahat}{\hat{\lambda}} = \sigma_\gamma^2 + \delta_\gamma^2 &&\sigmaepsilonhat = \sigma_\epsilon^2 + \delta_\epsilon^2,
		\end{align*}
		where $\delta_\nu^2$, $\delta_\gamma^2$, and $\delta_\epsilon^2$ are all $\op(1)$.  We will write $\delta_\ast^2 \defined \max\{\delta_\nu^2, \delta_\gamma^2, \delta_\epsilon^2\}$.  Define
		\begin{align*}
			&\Sigma \defined \sigma_\nu^2 ZZ^\T + \sigma_\gamma^2 WW^\T + \sigma_\epsilon^2 I_n,\\
			&\Delta \defined \delta_\nu^2 ZZ^\T + \delta_\gamma^2 WW^\T + \delta_\epsilon^2 I_n.
		\end{align*}
		Note that $\Sigma$ is positive definite and $\Delta$ is invertible almost surely.  Then, by the Matrix Inversion Lemma,
		\begin{align*}
			\left( \Sigma + \Delta \right)^{-1} = \Sigma^{-1} - \Sigma^{-1} \left( \Delta^{-1} + \Sigma^{-1} \right)^{-1} \Sigma^{-1}.
		\end{align*}
		Some algebra will yield
		\begin{align*}
			\etahat =& \hat{\mu}_\text{EW} + \frac{\sigmanuhat}{\hat{d}} ZZ^\T \left( \frac{\sigmanuhat}{\hat{d}} ZZ^\T + \frac{\sigmagammahat}{\hat{\lambda}}WW^\T + \sigmaepsilonhat I_n \right)^{-1} \left( Y - \hat{\mu}_\text{EW} \right)\\
			=& \hat{\mu}_\text{EW} + \left( \sigma_\nu^2 + \delta_\nu^2 \right) ZZ^\T \left( \Sigma + \Delta \right)^{-1} \left( Y - \hat{\mu}_\text{EW} \right)\\
			=& \left(\hat{\mu}_\text{EW} - \mu\right) + \left( \sigma_\nu^2 + \delta_\nu^2 \right) ZZ^\T \left( \Sigma + \Delta \right)^{-1} \left( \mu - \hat{\mu}_\text{EW} \right) 
			\\&+ \delta_\nu^2 ZZ^\T \left( \Sigma + \Delta \right)^{-1} \left( Z\nu + W\gamma + \epsilon \right)
			\\&- \sigma_\nu^2 ZZ^\T \Sigma^{-1} \left( \Delta^{-1} + \Sigma^{-1} \right)^{-1} \Sigma^{-1} \left( Z\nu + W\gamma + \epsilon \right)
			\\&+\mu + \sigma_\nu^2 ZZ^\T \Sigma^{-1} \left( Z\nu + W\gamma + \epsilon \right),
		\end{align*}
		where we have added and subtracted $\mu$ and applied the Matrix Inversion Lemma.  From Lemma \ref{lemmaeb}, it follows that
		\begin{align*}
			\etahatoracle = \mu + \sigma_\nu^2 ZZ^\T \Sigma^{-1} \left( Z\nu + W\gamma + \epsilon \right).
		\end{align*}
		Define $\xi \defined \etahat - \etahatoracle$.  Therefore,
		\begin{align*}
			\frac{1}{n} \left( \left\Vert \etahat - \eta \right\Vert^2 - \left\Vert \etahatoracle - \eta \right\Vert^2 \right)
			= \frac{1}{n} \left( \left\Vert \xi \right\Vert^2 + 2\xi^\T\left( \etahatoracle - \eta \right) \right).
		\end{align*}
		We will prove each of the two terms on the right hand side are $\op(1)$.  Before doing so, we will prove a few useful facts that will facilitate the remainder of the proof.
		\begin{enumerate}[label=(\Roman*)]
			\item $\left\Vert ZZ^\T \right\Vert^2 = \mathcal{O}(1)$.
			\item $\left\Vert Z\nu + W\gamma + \epsilon \right\Vert^2 = \Op(n)$ and $\left\Vert Z\nu \right\Vert^2 = \Op(n)$.
			\item $\left\Vert \Delta \right\Vert^2 = \op(1)$ and $\left\Vert \left( \Sigma + \Delta \right)^{-1} \right\Vert^2 = \Op(1)$.
		\end{enumerate}
		(I) is immediate since $\Sigma$ is the sum of three positive semi-definite matrices, one of which being $\sigma_\nu^2 ZZ^\T$, and $\left\Vert \Sigma \right\Vert^2 = \mathcal{O}(1)$ by assumption \ref{assumptionrandomeffectsizes}.
		The first part of (II) follows from the fact that 
		\begin{align*}
			\left\Vert Z\nu + W\gamma + \epsilon \right\Vert^2 \preceq \lambda_{\text{max}}\left( \Sigma \right) \chi^2_n = \Op(n),
		\end{align*}
		where $\preceq$ denotes stochastic ordering and $\lambda_{\text{max}}\left( \Sigma \right) = \mathcal{O}(1)$ by assumption \ref{assumptionrandomeffectsizes}.  The second part of (II) is identical except we apply (I) to bound the maximal eigenvalue.  Finally, for (III), the first part follows from $\delta_\ast^2 \conp 0$ and $\left\Vert \Sigma \right\Vert = \mathcal{O}(1)$.  For the second part, note that
		\begin{align*}
			\left\Vert \left( \Sigma + \Delta \right)^{-1} \right\Vert^2 \leq \left\Vert \Sigma^{-1} \right\Vert^2 \left\Vert \left( I_n + \Sigma^{-1}\Delta \right)^{-1} \right\Vert^2 = \mathcal{O}(1) \left\Vert \left( I_n + \Sigma^{-1}\Delta \right)^{-1} \right\Vert^2.
		\end{align*}
		Then, $\left\Vert \Sigma^{-1}\Delta \right\Vert \conp 0$.  Therefore,
		\begin{align*}
			\left\Vert \left( I_n + \Sigma^{-1}\Delta \right)^{-1} \right\Vert^2
			= \left\Vert \sum_{k=0}^{\infty} \left(- \Sigma^{-1}\Delta \right)^{k} \right\Vert^2
			\leq \sum_{k=0}^{\infty} \left\Vert \left(- \Sigma^{-1}\Delta \right)^{k} \right\Vert^2 < \infty.
		\end{align*}
		This proves all three claims.
		We can now show that $\left\Vert \xi \right\Vert^2 = \op(n)$, which we will show in parts.  To that end, note that
		\begin{align*}
			&\left\Vert \left(I_n - \left( \sigma_\nu^2 + \delta_\nu^2 \right) ZZ^\T \left( \Sigma + \Delta \right)^{-1}\right) \left( \hat{\mu}_\text{EW} - \mu \right) \right\Vert^2\\
			&\leq \left\Vert I_n - \left( \sigma_\nu^2 + \delta_\nu^2 \right) ZZ^\T \left( \Sigma + \Delta \right)^{-1}\right\Vert^2 \left\Vert \hat{\mu}_\text{EW} - \mu \right\Vert^2\\
			&= \op(n).
		\end{align*}
		In the last line, we have used (I) and (III) from above and Lemma \ref{lemmaoraclecorrelated}.  Similarly,
		\begin{align*}
			\Big\Vert &\delta_\nu^2 ZZ^\T \left( \Sigma + \Delta \right)^{-1} \left( Z\nu + W\gamma + \epsilon \right) \Big\Vert^2\\
			&\leq \delta_\nu^4 \left\Vert ZZ^\T \right\Vert^2 \left\Vert \left( \Sigma + \Delta \right)^{-1} \right\Vert^2 \left\Vert Z\nu + W\gamma + \epsilon \right\Vert^2 \\&= \op(1).
		\end{align*}
		The last line follows from the fact that $\delta_\nu^2 \conp 0$ and (I), (II), and (III) from above.  For the last term, we will apply the Matrix Inversion Lemma again to obtain
		\begin{align*}
			\left( \Delta^{-1} + \Sigma^{-1} \right)^{-1} = \Delta - \Delta \left( \Delta + \Sigma \right)^{-1} \Delta.
		\end{align*}
		Hence,
		\begin{align*}
			&\left\Vert \sigma_\nu^2 ZZ^\T \Sigma^{-1} \left( \Delta^{-1} + \Sigma^{-1} \right)^{-1} \Sigma^{-1} \left( Z\nu + W\gamma + \epsilon \right) \right\Vert^2\\
			&\leq \sigma_\nu^4 \left\Vert ZZ^\T \right\Vert^2 \left\Vert \Sigma^{-1} \right\Vert^4 \left\Vert \Delta \right\Vert^2 \left\Vert I_n - \left( \Delta + \Sigma \right)^{-1} \Delta \right\Vert^2 \left\Vert Z\nu + W\gamma + \epsilon \right\Vert^2\\
			&= \op(n),
		\end{align*}
		where (I), (II), and (III) have all been used from above.  Combining these three results with the triangle inequality, this proves that
		\begin{align*}
			\left\Vert \xi \right\Vert^2 = \op(n).
		\end{align*}
		For the other quantity, we have that
		\begin{align*}
			\left\Vert \etahatoracle - \eta \right\Vert^2 &= \left\Vert \sigma_\nu^2 ZZ^\T \Sigma^{-1} \left( Z\nu + W\gamma + \epsilon \right) - Z\nu \right\Vert^2\\
			&\leq 2 \left\Vert \sigma_\nu^2 ZZ^\T \Sigma^{-1} \left( Z\nu + W\gamma + \epsilon \right) \right\Vert^2 + 2\left\Vert Z\nu \right\Vert^2\\
			&\leq 2 \left\Vert \sigma_\nu^2 ZZ^\T \Sigma^{-1} \left( Z\nu + W\gamma + \epsilon \right) \right\Vert^2 + 2\left\Vert Z\nu \right\Vert^2\\
			&\leq 2 \sigma_\nu^4 \left\Vert ZZ^\T \right\Vert^2 \left\Vert \Sigma^{-1} \right\Vert^2 \left\Vert  Z\nu + W\gamma + \epsilon \right\Vert^2 + 2\left\Vert Z\nu \right\Vert^2.
		\end{align*}
		Applying all three facts from above demonstrates that
		\begin{align*}
			\left\Vert \etahatoracle - \eta \right\Vert^2 = \Op(n).
		\end{align*}
		Using the Cauchy-Schwarz inequality yields
		\begin{align*}
			2\left|\xi^\T\left( \etahatoracle - \eta \right)\right| \leq 2\left\Vert \xi \right\Vert \left\Vert \etahatoracle - \eta \right\Vert = \op(n),
		\end{align*}
		which finishes the proof.
	\end{proof}

\bibliographystyle{newapa}
\bibliographystyle{acm}	
\bibliography{mixedmodelref}

\begin{thebibliography}{}

\bibitem[\protect\citeauthoryear{Bradic, Claeskens \& Gueuning}{Bradic
  et~al.}{2017}]{bradic2017}
Bradic, J., Claeskens, G., \& Gueuning, T. (2017).
\newblock Fixed effects testing in high-dimensional linear mixed models.
\newblock {\em arXiv preprint arXiv:1708.04887}.

\bibitem[\protect\citeauthoryear{Brown, Mukherjee \& Weinstein}{Brown
  et~al.}{2018}]{brown2018}
Brown, L.~D., Mukherjee, G., \& Weinstein, A. (2018).
\newblock Empirical bayes estimates for a two-way cross-classified model.
\newblock {\em The Annals of Statistics}, {\em 46\/}(4), 1693--1720.

\bibitem[\protect\citeauthoryear{Cai \& Guo}{Cai \& Guo}{2017}]{cai2017}
Cai, T.~T. \& Guo, Z. (2017).
\newblock Confidence intervals for high-dimensional linear regression: Minimax
  rates and adaptivity.
\newblock {\em The Annals of statistics}, {\em 45\/}(2), 615--646.

\bibitem[\protect\citeauthoryear{Chen, Li, Shi \& Zhu}{Chen
  et~al.}{2015}]{chen2015}
Chen, F., Li, Z., Shi, L., \& Zhu, L. (2015).
\newblock Inference for mixed models of anova type with high-dimensional data.
\newblock {\em Journal of Multivariate Analysis}, {\em 133}, 382--401.

\bibitem[\protect\citeauthoryear{Groll \& Tutz}{Groll \&
  Tutz}{2014}]{groll2014}
Groll, A. \& Tutz, G. (2014).
\newblock Variable selection for generalized linear mixed models by l
  1-penalized estimation.
\newblock {\em Statistics and Computing}, {\em 24\/}(2), 137--154.

\bibitem[\protect\citeauthoryear{Jiang}{Jiang}{2007}]{jiang2007}
Jiang, J. (2007).
\newblock {\em Linear and generalized linear mixed models and their
  applications}.
\newblock Springer Science \& Business Media.

\bibitem[\protect\citeauthoryear{Laurent \& Massart}{Laurent \&
  Massart}{2000}]{laurent2000}
Laurent, B. \& Massart, P. (2000).
\newblock Adaptive estimation of a quadratic functional by model selection.
\newblock {\em Annals of Statistics}, 1302--1338.

\bibitem[\protect\citeauthoryear{Leung \& Barron}{Leung \&
  Barron}{2006}]{leung2006}
Leung, G. \& Barron, A.~R. (2006).
\newblock Information theory and mixing least-squares regressions.
\newblock {\em IEEE Transactions on Information Theory}, {\em 52\/}(8),
  3396--3410.

\bibitem[\protect\citeauthoryear{Li, Cai \& Li}{Li et~al.}{2019}]{li2019}
Li, S., Cai, T.~T., \& Li, H. (2019).
\newblock Inference for high-dimensional linear mixed-effects models: A
  quasi-likelihood approach.
\newblock {\em arXiv preprint arXiv:1907.06116}.

\bibitem[\protect\citeauthoryear{Martin, Mullis \& Hooper}{Martin
  et~al.}{2016}]{martin2016}
Martin, M.~O., Mullis, I.~V., \& Hooper, M. (2016).
\newblock Methods and procedures in timss 2015.
\newblock {\em TIMSS \& PIRLS International Study Center, Lynch School of
  Education, Boston College and International Association for the Evaluation of
  Educational Achievement (IEA)}.

\bibitem[\protect\citeauthoryear{M{\"u}ller, Scealy, Welsh \&
  others}{M{\"u}ller et~al.}{2013}]{muller2013}
M{\"u}ller, S., Scealy, J.~L., Welsh, A.~H., et~al. (2013).
\newblock Model selection in linear mixed models.
\newblock {\em Statistical Science}, {\em 28\/}(2), 135--167.

\bibitem[\protect\citeauthoryear{Mullis, Martin, Foy \& Hooper}{Mullis
  et~al.}{2016}]{mullis2016}
Mullis, I., Martin, M., Foy, P., \& Hooper, M. (2016).
\newblock Timss 2015 international results in mathematics. retrieved from
  boston college, timss \& pirls international study center.

\bibitem[\protect\citeauthoryear{Mullis, Martin \& Loveless}{Mullis
  et~al.}{2016}]{mullis20years}
Mullis, I.~V., Martin, M.~O., \& Loveless, T. (2016).
\newblock 20 years of timss: International trends in mathematics and science
  achievement, curriculum, and instruction.
\newblock {\em TIMSS \& PIRLS International Study Center, Lynch School of
  Education, Boston College and International Association for the Evaluation of
  Educational Achievement (IEA)}.

\bibitem[\protect\citeauthoryear{Rasch}{Rasch}{1960}]{rasch1960}
Rasch, G. (1960).
\newblock
\newblock Studies in mathematical psychology: I. probabilistic models for some
  intelligence and attainment tests.

\bibitem[\protect\citeauthoryear{Raskutti, Wainwright \& Yu}{Raskutti
  et~al.}{2011}]{raskutti2011}
Raskutti, G., Wainwright, M.~J., \& Yu, B. (2011).
\newblock Minimax rates of estimation for high-dimensional linear regression
  over $\ell_q $-balls.
\newblock {\em IEEE transactions on information theory}, {\em 57\/}(10),
  6976--6994.

\bibitem[\protect\citeauthoryear{Rigollet \& Tsybakov}{Rigollet \&
  Tsybakov}{2011}]{rigollet2011}
Rigollet, P. \& Tsybakov, A. (2011).
\newblock Exponential screening and optimal rates of sparse estimation.
\newblock {\em The Annals of Statistics}, {\em 39\/}(2), 731--771.

\bibitem[\protect\citeauthoryear{Rigollet \& Tsybakov}{Rigollet \&
  Tsybakov}{2012}]{rigollet2012}
Rigollet, P. \& Tsybakov, A.~B. (2012).
\newblock Sparse estimation by exponential weighting.
\newblock {\em Statistical Science}, {\em 27\/}(4), 558--575.

\bibitem[\protect\citeauthoryear{Rudelson \& Vershynin}{Rudelson \&
  Vershynin}{2013}]{rudelson2013}
Rudelson, M. \& Vershynin, R. (2013).
\newblock Hanson-wright inequality and sub-gaussian concentration.
\newblock {\em Electronic Communications in Probability}, {\em 18}.

\bibitem[\protect\citeauthoryear{Schelldorfer, B{\"u}hlmann \& van~de
  Geer}{Schelldorfer et~al.}{2011}]{schelldorfer2011}
Schelldorfer, J., B{\"u}hlmann, P., \& van~de Geer, S. (2011).
\newblock Estimation for high-dimensional linear mixed-effects models using
  {$\ell_1$}-penalization.
\newblock {\em Scandinavian Journal of Statistics}, {\em 38\/}(2), 197--214.

\bibitem[\protect\citeauthoryear{Tibshirani}{Tibshirani}{1996}]{tibshirani1996}
Tibshirani, R. (1996).
\newblock Regression shrinkage and selection via the lasso.
\newblock {\em Journal of the Royal Statistical Society: Series B
  (Methodological)}, {\em 58\/}(1), 267--288.

\bibitem[\protect\citeauthoryear{Wang, Zhou \& Qu}{Wang
  et~al.}{2012}]{wang2012}
Wang, L., Zhou, J., \& Qu, A. (2012).
\newblock Penalized generalized estimating equations for high-dimensional
  longitudinal data analysis.
\newblock {\em Biometrics}, {\em 68\/}(2), 353--360.

\end{thebibliography}
	
\end{document}